\theoremstyle{plain}
\numberwithin{equation}{section}
\newtheorem{thm}[equation]{Theorem}
\newtheorem{lem}[equation]{Lemma}
\newtheorem{cor}[equation]{Corollary}
\newtheorem{prop}[equation]{Proposition}
\theoremstyle{definition}
\newtheorem{df}[equation]{Definition}
\theoremstyle{remark}
\newtheorem{rem}[equation]{Remark}
\crefname{thm}{Theorem}{Theorems}
\crefname{lem}{Lemma}{Lemmas}
\crefname{cor}{Corollary}{Corollaries}
\crefname{prop}{Proposition}{Propositions}
\crefname{df}{Definition}{Definitions}
\crefname{rem}{Remark}{Remarks}
\crefname{eg}{Example}{Examples}
\crefname{equation}{}{}
\newcommand{\bC}{\mathbb{C}}
\newcommand{\rD}{\mathrm{D}}
\newcommand{\bR}{\mathbb{R}}
\newcommand{\bZ}{\mathbb{Z}}
\newcommand{\fT}{\mathfrak{T}}
\newcommand{\cA}{\mathcal{A}}
\newcommand{\cB}{\mathcal{B}}
\newcommand{\cC}{\mathcal{C}}
\newcommand{\cD}{\mathcal{D}}
\newcommand{\cE}{\mathcal{E}}
\newcommand{\cF}{\mathcal{F}}
\newcommand{\cH}{\mathcal{H}}
\newcommand{\fH}{\mathfrak{H}}
\newcommand{\cK}{\mathcal{K}}
\newcommand{\cM}{\mathcal{M}}
\newcommand{\mrm}{\mathrm{m}}
\newcommand{\cO}{\mathcal{O}}
\newcommand{\cS}{\mathcal{S}}
\newcommand{\cU}{\mathcal{U}}
\newcommand{\cW}{\mathcal{W}}
\newcommand{\fW}{\mathfrak{W}}
\newcommand{\cX}{\mathcal{X}}
\newcommand{\cZ}{\mathcal{Z}}
\providecommand{\1}{\mathrm{1}\hspace{-0.25em}\mathrm{l}} 
\renewcommand{\1}{\mathrm{1}\hspace{-0.25em}\mathrm{l}} 
\providecommand{\barotimes}{\mathbin{\bar{\otimes}}}
\newcommand{\wh}{\widehat}
\newcommand{\wt}{\widetilde}
\DeclareMathOperator{\id}{id}
\DeclareMathOperator{\ev}{ev}
\DeclareMathOperator{\pr}{pr}
\DeclareMathOperator{\Ker}{Ker}
\DeclareMathOperator{\Aut}{Aut}
\DeclareMathOperator{\Rep}{Rep}
\DeclareMathOperator{\Irr}{Irr}
\DeclareMathOperator{\Ad}{Ad} 
\begin{document}
\title{Discrete quantum subgroups of complex semisimple quantum groups}
\author{Kan Kitamura}
\date{}
\address{Graduate School of Mathematical Sciences, The University of Tokyo, 3-8-1 Komaba Meguro-ku Tokyo 153-8914, Japan}
\email{kankita@ms.u-tokyo.ac.jp}
\subjclass[2020]{Primary 46L67, Secondary 22E40}
\keywords{lattice; quantum group; Lie group; quantum double}
\begin{abstract}
	We classify discrete quantum subgroups in the quantum double of the $q$-deformation of a compact semisimple Lie group, regarded as the complexification. 
	We also record their classifications in some variants of quantum groups.
	Along the way, we show that quantum doubles of non-Kac type compact quantum groups do not admit the quantum analog of lattices considered by Brannan--Chirvasitu--Viselter. 
\end{abstract}
\maketitle

	\section{Introduction}\label{sec:intro}
	
	The Drinfeld--Jimbo $q$-deformation of the universal enveloping algebra of a semisimple Lie algebra gives the first example of quantum groups. 
	The corresponding deformation of the C*-algebra of continuous functions on the compact Lie group is realized as a compact quantum group by Woronowicz. 
	For the $q$-deformation $K_q$ of a compact semisimple Lie group $K$, it has been observed that the quantum double $G_q=\rD(K_q)$ of $K_q$ can be regarded as the $q$-deformation of the complexification $G$ of $K$ as in \cite{Podles-Woronowicz1990:quantum,Pusz1993:irrepqLorentz,Arano:sphericalDrinfeld,Voigt-Yuncken2020:book}. 
	
	Analytic properties of a locally compact group, most notably Kazhdan's property (T), are inherited by its lattices. 
	In the quantum situation, the quantum double $\rD(K)$ of a compact quantum group $K$ contains its Pontryagin dual $\wh{K}$ as a discrete quantum subgroup. 
	For $\wh{K}$, central versions of several approximation properties can be considered as investigated in \cite{Freslon2013:examples,deCommer-Freslon-Yamashita2014:CCAP,Arano:sphericalDrinfeld}, 
	which are also closely related to the original approximation properties of $\rD(K)$. 
	On the other hand, $G_q$ in our picture decomposes into $K_q$ and $\wh{K}_q$, corresponding to $K$ and $AN$, respectively, from the Iwasawa decomposition $G=KAN$ in the classical setting. 
	Here $\wh{K}_q$ is discrete unlike $AN$. In this sense, $G_q$ has a discrete quantum subgroup with a different appearance from arithmetic subgroups, which essentially exhaust all lattices in most of the classical cases by Margulis' arithmeticity theorem. 
	
	With motivation from the quantum analog of lattices, we classify discrete closed quantum subgroups in $G_q$ for a connected simply connected compact semisimple Lie group $K$ and $0<q<1$. 
	In particular, it follows that when $G$ is simple, the discrete quantum subgroups of $G_q$ that are not finite subgroups are always ``commensurable" with $\wh{K}_q$. In other words, they share common finite index quantum subgroups with $\wh{K}_q$. 
	See \cref{thm:classifyss} for the precise form of the classification. 
	There is another approach by Christian Voigt to discrete quantum subgroups of $G_q$. 
	In \cite{Hoshino2023:equivariant}, the classification of finite index quantum subgroups of $\wh{K}_q$ is contained. 
	
	We should also mention the work by Brannan--Chirvasitu--Viselter~\cite{Brannan-Chirvasitu-Viselter2020:actionslattices}, where a quantum analog of lattices in locally compact quantum groups was considered. 
	They investigated several permanence properties and showed that $\rD(K)$ for a Kac type compact quantum group $K$ has $\wh{K}$ as its lattice. 
	However, when $K$ is not of Kac type, we show that $\rD(K)$ does not admit any lattices in this sense. 
	Especially, $G_q$ for $0<q<1$ does not admit them in contrast to the classical situation. 
	
	Our strategy is based on the simple observation that any discrete quantum subgroup of $G_q$ must be contained in the semidirect product of the maximal torus $T$ of $K$ and $\wh{K}_q$, 
	which relies on the classification of irreducible $*$-representations of $C(K_q)$ due to Soibelman~\cite{Soibelman1990:algebraquantum,Korogodski-Soibelman1998:bookptI}. 
	Here, we note that the semidirect product of $T$ and $\wh{K}_q$ gives the quantum Borel subgroup of $G_q$ in view of the Iwasawa decomposition. 
	This reduction makes the situation much easier than lattices in the classical case. 
	Then we conduct the classification by rewriting the involved data in terms of the fusion ring of a compact group. 
	We can also apply our method for variants of $G_q$, such as the quantum double $\rD(U_q(n))$ of the $q$-deformation of a unitary group, to classify their discrete quantum subgroups. 
	The observation still works in a more general setting of a maximal Kac quantum subgroup defined by \cite{Soltan:quantumBohr} after an idea due to Stefaan Vaes, 
	to show the absence of the analog of lattices by \cite{Brannan-Chirvasitu-Viselter2020:actionslattices}. 
	To treat the non-coamenable cases, we use the techniques of the Fourier algebras of quantum groups as utilized in \cite{Daws-Kasprzak-Skalski-Soltan:closed}. 
	
	The plan of this paper is as follows. 
	In \cref{sec:preliminary}, we recall basic concepts about locally compact quantum groups and fix some notation and terminologies. 
	We begin \cref{sec:nolattice} with some technical aspects of quantum doubles to settle the meaning of discrete quantum subgroups in quantum doubles (\cref{prop:DQGclVW}). 
	Then we show the non-existence of lattices in the sense of \cite{Brannan-Chirvasitu-Viselter2020:actionslattices} (\cref{thm:nolattice}). 
	The reader who only wants to see the classification of discrete quantum subgroups of $G_q$ can skip this section except \cref{rem:subDQGtorus}. 
	In \cref{sec:classify}, we give the main classification result of discrete quantum subgroups in $G_q$ (\cref{thm:classifyss}). 
	\cref{sec:variant} treats the cases of several quantum groups akin to $G_q$, such as variants of the quantum Lorentz group appeared in \cite{Pusz1993:irrepqLorentz} (\cref{cor:classifyLorentz}), and $\rD(U_q(n))$ (\cref{thm:classifyGLqC}). 
	
	\subsection*{Acknowledgments}
	The author would like to express his deepest appreciation to Christian Voigt for discussions on various topics of quantum groups, including quantum analog of lattices and homogeneous spaces, and his comments on the early manuscript. 
	He would like to thank Jacek Krajczok for discussions on quantum doubles and Kac type quantum groups. He would like to thank his advisor Yasuyuki Kawahigashi for his support and encouragement. 
	Also, he would like to thank the University of Glasgow, where a large part of this manuscript was written during the visit, for all the hospitality. 
	This work was supported by JSPS KAKENHI Grant Number JP21J21283, JST CREST program JPMJCR18T6, JSPS Overseas Challenge Program for Young Researchers, and the WINGS-FMSP program at the University of Tokyo. 
	
	\section{Preliminaries}\label{sec:preliminary}
	
	Every tensor product of C*-algebras in this paper is taken with respect to the minimal C*-norm. 
	We use the symbol $\barotimes$ for a tensor product of von Neumann algebras and $\odot$ for an algebraic tensor product when we want to emphasize it. 
	We often abuse notation to still write $f$ for the extension of a bounded linear map $f\colon A\to \cM(B)$ to $\cM(A)$ that is strictly continuous on the unit ball if it exists. 
	Also, we use the so-called leg-numbering notation. 
	The symbol $\cZ$ shall indicate the center of an algebra or a group. 
	
	We recall locally compact quantum groups by Kustermans--Vaes~\cite{Kustermans-Vaes:lcqg,Kustermans-Vaes:lcqgvN,Vaes:thesis}. 
	\begin{df}\label{def:LCQG}
		A \emph{locally compact quantum group} is a pair $G=(L^\infty(G),\Delta_G)$ of a von Neumann algebra $L^\infty(G)$ and a unital normal $*$-homomorphism $\Delta_G\colon L^\infty(G)\to L^\infty(G)\barotimes L^\infty(G)$, called the comultiplication, with the following conditions. 
		\begin{itemize}
			\item
			$\Delta_G$ is coassociative in the sense of $(\Delta_G\otimes\id)\Delta_G=(\id\otimes\Delta_G)\Delta_G$. 
			\item
			There are faithful normal semifinite weights $h_l,h_r$ on $L^\infty(G)$ 
			such that 
			$(\omega\otimes h_l)\Delta_G=h_l$ and $(h_r\otimes\omega)\Delta_G=h_r$ on $L^\infty(G)_+$ for any normal state $\omega$ on $L^\infty(G)$. 
		\end{itemize}
	\end{df}
	
	The GNS construction of $h_l$ gives a faithful normal $*$-representation of $L^\infty(G)$ on a Hilbert space denoted by $L^2(G)$. 
	We have a well-defined unitary $W^G\in \cU(L^2(G)\otimes L^2(G))$ satisfying $W^{G*}\Lambda(x)\otimes\Lambda(y)= (\Lambda\otimes\Lambda) \bigl(\Delta_G(y)(x\otimes1)\bigr)$, 
	where $\Lambda\colon \{ x\in L^\infty(G) \,|\, h_l(x^*x)<\infty \}\to L^2(G)$ is the map from the GNS construction. 
	This is a multiplicative unitary in the sense of $W^G_{23}W^G_{12}=W^{G}_{12}W^G_{13}W^G_{23}$. 
	It satisfies $\Ad W^{G*}(1\otimes x )=\Delta_G(x)$ for $x\in L^\infty(G)$. 
	
	We mainly use the C*-algebraic picture $C_0(G)$ of $G$, given by the norm closure of $\{ (\id\otimes\omega)(W^G) \,|\, \omega\in\cB(L^2(G))_* \}$. Note that $C_0(G)\subset L^\infty(G)$ is strongly dense. 
	The comultiplication $\Delta_G$ restricts to a non-degenerate $*$-homomorphism $C_0(G)\to \cM(C_0(G)\otimes C_0(G))$. 
	We also have the Pontryagin dual $\wh{G}=(L^\infty(\wh{G}),\wh{\Delta}_G)$ of $G$ as a locally compact quantum group such that we can canonically identify $L^2(\wh{G})$ with $L^2(G)$ and the corresponding multiplicative unitary, denoted by $\wh{W}^G$, is given by $W^{G*}_{21}$. 
	
	A \emph{unitary representation} of a locally compact quantum group $G$ is a pair $\pi=(\cH_\pi,U_\pi)$ of a Hilbert space $\cH_\pi$ and a unitary $U_\pi\in\cU\cM(\cK(\cH_\pi)\otimes C_0(G))$ such that $(\id\otimes\Delta_G)(U_\pi)=(U_{\pi})_{12}(U_\pi)_{13}$. 
	For two unitary representations $\pi,\varpi$ of $G$, we define their tensor product by $\pi\otimes\varpi:=(\cH_\pi\otimes\cH_\varpi, (U_{\pi})_{13}(U_{\varpi})_{23})$. 
	There is the universal version $C^u_0(G)$ of the C*-algebra $C_0(G)$, whose $*$-representations correspond to unitary representations of $\wh{G}$. 
	Also, $\Delta_G$ lifts to an injective non-degenerate coassociative $*$-homomorphism $\Delta^u_G\colon C^u_0(G)\to\cM(C^u_0(G)\otimes C^u_0(G))$. 
	We have a canonical surjective $*$-homomorphism $C^u_0(G)\to C_0(G)$ compatible with comultiplications. 
	If this is $*$-isomorphic, $G$ is called \emph{coamenable}. 
	See \cite{Kustermans:universal,Soltan-Woronowicz:mltu2}. 
	
	A \emph{compact quantum group} is a locally compact quantum group $G$ with unital $C_0(G)$. 
	In this case we write $C(G):=C_0(G)$ and $C^u(G):=C^u_0(G)$. 
	We can take $h_l$ and $h_r$ in \cref{def:LCQG} as a common state, which is called the \emph{Haar state}. 
	Finite dimensional unitary representations of $G$ form a rigid C*-tensor category denoted by $\Rep(G)$, and we write $\Irr G$ for the set of isomorphism classes of irreducible objects in $\Rep(G)$. 
	The fusion ring of $G$ is $R(G):=\bZ^{\oplus\Irr G}$ with the multiplication induced by the irreducible decompositions of tensor products. 
	We refer \cite{Neshveyev-Tuset:book} for more basic facts on compact quantum groups and their representation categories. 
	The Pontryagin dual $\wh{G}$ of a compact quantum group $G$ is called a \emph{discrete quantum group}. In this case, we write $c_0(\wh{G}):=C_0(\wh{G})$ and $\ell^\infty(\wh{G}):=L^\infty(\wh{G})$. They satisfy $c_0(\wh{G})\cong \bigoplus\limits_{\pi\in\Irr G}\cB(\cH_\pi)$ and $\ell^\infty(\wh{G})\cong \prod\limits_{\pi\in\Irr G}\cB(\cH_\pi)$, and $\wh{G}$ is coamenable. 
	
	We also utilize the notion of homomorphisms of locally compact quantum groups, which we refer \cite{Meyer-Roy-Woronowicz:qghom}. (Note the difference in the conventions.)
	\begin{df}
		For locally compact quantum groups $G$ and $H$, a \emph{homomorphism} $\phi\colon H\to G$ is given by a bicharacter, which is defined as a unitary $W^\phi\in\cU\cM(C_0(H)\otimes C_0(\wh{G}))$ such that 
		\begin{align*}
			&
			W^G_{23}W^{\phi}_{12} = W^{\phi}_{12}W^{\phi}_{13}W^G_{23}, 
			\quad\text{and}\quad
			W^{\phi}_{23}W^H_{12} = W^{H}_{12}W^{\phi}_{13}W^{\phi}_{23}. 
		\end{align*}
	\end{df}
	A bicharacter $W^\phi$ bijectively corresponds to a non-degenerate $*$-homomorphism compatible with comultiplications, denoted by $\phi^*\colon C^u_0(G)\to \cM(C^u_0(H))$. 
	Compositions of homomorphisms are given by compositions of the corresponding $\phi^*$. 
	For a bicharacter $W^\phi$, the unitary $W^{\phi*}_{21}$ is also a well-defined bicharacter and we write $\wh{\phi}\colon \wh{G}\to \wh{H}$ for the corresponding homomorphism. 
	The multiplicative unitary satisfies $W^G\in \cM(C_0(G)\otimes C_0(\wh{G}))$ and is a bicharacter corresponding to the identity map on $C^u_0(G)$. We write $\id_G\colon G\to G$ for the homomorphism given by $W^G$. 
	We have a trivial bicharacter $1\in \cU\cM(\bC\otimes C_0(\wh{G}))$, which corresponds to the $*$-homomorphism denoted by $\epsilon_G\colon C^u_0(G)\to \bC$ called the \emph{counit} of $G$. The counit factors through $C_0(G)$ if and only if $G$ is coamenable. 
	Especially, every locally compact group is coamenable. 
	
	It is convenient to use generalized quantum doubles associated to homomorphisms, and we refer \cite{Baaj-Vaes:doublecrossed,Roy:double} for their operator algebraic treatments. 
	\begin{df}
		Let $\phi\colon H\to G$ be a homomorphism of locally compact quantum groups. 
		We define 
		\begin{align*}
			&
			\rD(\phi):=(L^\infty(H)\barotimes L^\infty(\wh{G}),(\id\otimes(\sigma\Ad W^\phi)\otimes\id)(\Delta_H\otimes\wh{\Delta}_G)) , 
		\end{align*}
		which is a well-defined locally compact quantum group called the \emph{generalized quantum double}. 
		Here $\sigma\colon L^\infty(G)\barotimes L^\infty(H)\cong L^\infty(H)\barotimes L^\infty(G)$ is the flipping map $x\otimes y\mapsto y\otimes x$. 
	\end{df}
	
	We note $C_0(\rD(\phi))=C_0(H)\otimes C_0(\wh{G})\subset L^\infty(\rD(\phi))$. 
	If $H$ and $\wh{G}$ are coamenable, then $\rD(\phi)$ is also coamenable by the counit $\epsilon_H\otimes\epsilon_{\wh{G}}$. 
	We put $\rD(G):=\rD(\id_G)$ and call it the quantum double of $G$. 
	
	Closed quantum subgroups are defined as special cases of homomorphisms. 
	We refer \cite{Daws-Kasprzak-Skalski-Soltan:closed} for the definitions and properties. 
	For a homomorphism $\phi\colon H\to G$ of locally compact quantum groups, we say 
	$\phi$ gives a \emph{closed quantum subgroup in the sense of Vaes} if $\wh{\phi}^*$ induces an injective normal $*$-homomorphism $L^\infty(\wh{G})\to L^\infty(\wh{H})$, and 
	$\phi$ gives a \emph{closed quantum subgroup in the sense of Woronowicz} if $\phi^*\colon C^u_0(G)\to C^u_0(H)$ is surjective. 
	The former condition implies the latter, and if $H$ is compact or $G$ is discrete the converse holds. 
	A closed quantum subgroup in the sense of Woronowicz of a discrete quantum group is discrete. 
	Homomorphisms $\phi\colon H\to G$, $\psi\colon G\to F$, and $\theta\colon K\to H$ of locally compact quantum groups induce a homomorphism $\rD(\psi\phi\theta)\to \rD(\phi)$. 
	It gives a closed quantum subgroup in the sense of Vaes if $\wh{\psi}$ and $\theta$ do. 
	Also, $H$ and $\wh{G}$ are closed quantum subgroups of $\rD(\phi)$ in the sense of Vaes. 
	Finally, if $\phi^*\colon C^u_0(G)\to C^u_0(H)$ is bijective, then $\phi$ gives the closed quantum subgroup in the sense of Vaes and so is the homomorphism $G\to H$ corresponding to $(\phi^*)^{-1}$. 
	
	In this paper, we always use the following notation and terminologies unless stated otherwise. 
	The symbol $K$ denotes a connected simply connected compact semisimple Lie group. 
	We take its maximal torus $T$ and fix $0<q<1$. 
	We identify $T$ as a closed quantum subgroup $\iota_q\colon T\to K_q$ in the sense of Vaes in a canonical way. 
	Note that $K_q$ is coamenable. 
	For a discrete quantum group $\Gamma$ and a locally compact quantum group $G$, we would like to say a homomorphism $\phi\colon \Gamma\to G$ giving a closed quantum subgroup in the sense of Woronowicz is a discrete quantum subgroup. 
	Fortunately, discrete quantum subgroups in quantum doubles of compact quantum groups are automatically closed quantum subgroups in the sense of Vaes by \cref{prop:DQGclVW}. 
	Since most of other closed quantum subgroups we encounter in this paper are in the sense of Vaes, we would like to shortly call them closed quantum subgroups. 
	
	\section{Discrete quantum subgroups}\label{sec:nolattice}
	
	We recall from \cite{Daws-Kasprzak-Skalski-Soltan:closed} the \emph{Fourier algebra} of a locally compact quantum group $G$, defined by 
	\begin{align*}
		&
		\cA_G:=\{ (\id\otimes\omega)(W^G) \,|\, \omega\in L^\infty(\wh{G})_* \}\subset C_0(G) . 
	\end{align*}
	This is isomorphic to $L^\infty(\wh{G})_*$ as a vector space (see \cite[Lemma~1.6]{Daws-Kasprzak-Skalski-Soltan:closed}) and we regard $\cA_G$ as a Banach space with the norm from $L^\infty(\wh{G})_*$. 
	For a compact quantum group $G$, we write $\cO(G)\subset C(G)$ for the dense Hopf $*$-subalgebra linearly spanned by all matrix coefficients of finite dimensional unitary representations of $G$, 
	and $c_c(\wh{G}):=\bigoplus\limits_{\pi\in\Irr G} \cB(\cH_\pi)$ as an algebraic direct sum. 
	It holds $C^u(G)$ is the universal C*-completion of $\cO(G)$. 
	
	\begin{lem}\label{lem:denseFourier}
		For a homomorphism $\phi\colon H\to G$ of compact quantum groups, 
		we have a norm-dense inclusion $\cO(H)\odot c_c(\wh{G})\subset \cA_{\rD(\phi)}$. 
	\end{lem}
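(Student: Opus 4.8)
The plan is to extract $\cA_{\rD(\phi)}$ directly from the explicit multiplicative unitary of the generalized quantum double and then recognise $\cO(H)\odot c_c(\wh G)$ as the set of slices by a dense family of functionals. Recall from the excerpt that $C_0(\rD(\phi))=C(H)\otimes c_0(\wh G)$, and from \cite{Baaj-Vaes:doublecrossed,Roy:double} that the multiplicative unitary $W^{\rD(\phi)}\in\cU\cM\bigl(C_0(\rD(\phi))\otimes C_0(\wh{\rD(\phi)})\bigr)$ is an explicit word in the legs of $W^H$, of $\wh W^G=W^{G*}_{21}$, and of the bicharacter $W^\phi$ together with their adjoints; equivalently, $\Delta_{\rD(\phi)}$ is obtained from the product comultiplication of $H\times\wh G$ by conjugation with a unitary assembled from $W^\phi$. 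I would also record that, since $\wh H$ and $\wh G$ are discrete, these three bicharacters are block diagonal along $\Irr H$, $\Irr G$, $\Irr G$ respectively, with finite blocks: $W^H=(U^\sigma)_{\sigma\in\Irr H}$ with $U^\sigma\in\cO(H)\odot\cB(\cH_\sigma)$ the representation $\sigma$; the $\pi$-block of $\wh W^G$ has entries in $\cO(G)$; and $W^\phi=(V_\pi)_{\pi\in\Irr G}$ with each $V_\pi\in\cO(H)\odot\cB(\cH_\pi)$ a finite dimensional unitary representation of $H$, its matrix entries lying in $\cO(H)$ because $H$ is compact and $W^\phi$ satisfies the bicharacter identities. (This is the one place where compactness of $H$ is used.)

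The inclusion goes as follows. For $\psi\in L^\infty(\wh{\rD(\phi)})_*$ the element $(\id\otimes\psi)(W^{\rD(\phi)})$ lies in $C(H)\otimes c_0(\wh G)$, and its $\pi$-component ($\pi\in\Irr G$) is obtained by cutting the $C_0(\rD(\phi))$-leg by the central projection $1\otimes z_\pi$, where $z_\pi\in c_0(\wh G)$ is the unit of the $\pi$-block; this is legitimate since $1\otimes z_\pi$ is a central projection of $L^\infty(\rD(\phi))$ sitting in $\cM(C_0(\rD(\phi)))$. Feeding the block-diagonal descriptions into the formula for $W^{\rD(\phi)}$, one sees that for $\psi$ built (according to the structure of $\wh{\rD(\phi)}$) out of a finitely supported functional in the $\wh H$-direction and a functional $h_G(\,\cdot\,x)$ with $x\in\cO(G)$ in the $G$-direction, the slice $(\id\otimes\psi)(W^{\rD(\phi)})$ has finite support along $\Irr G$ and, in each block, involves only finitely many matrix coefficients of $H$, hence lies in $\cO(H)\odot c_c(\wh G)$. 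Conversely, the slices of $W^H$ by finitely supported functionals already exhaust $\cO(H)$, the slices $(\id\otimes h_G(\,\cdot\,x))(\wh W^G)$ with $x\in\cO(G)$ exhaust $c_c(\wh G)$, and conjugation by the $W^\phi$-legs — conjugation by multiplier unitaries whose blocks $V_\pi$ have entries in $\cO(H)$ — maps $\cO(H)\odot\cB(\cH_\pi)$ bijectively onto itself for each $\pi$; so the slices of $W^{\rD(\phi)}$ by this family of $\psi$ are exactly $\cO(H)\odot c_c(\wh G)$. In particular $\cO(H)\odot c_c(\wh G)\subseteq\cA_{\rD(\phi)}$.

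For density, the Fourier transform $\psi\mapsto(\id\otimes\psi)(W^{\rD(\phi)})$ is an isometric linear isomorphism of $L^\infty(\wh{\rD(\phi)})_*$ onto $\cA_{\rD(\phi)}$ by \cite[Lemma~1.6]{Daws-Kasprzak-Skalski-Soltan:closed}, so it suffices to see that the family of functionals $\psi$ used above is norm-dense in $L^\infty(\wh{\rD(\phi)})_*$. Via the structure of $\wh{\rD(\phi)}$ this reduces to the density of finitely supported functionals in $L^\infty(\wh H)_*$ (that is, of $\bigoplus_\sigma\cB(\cH_\sigma)_*$ in the $\ell^1$-direct sum) and the density of $\cO(G)$ in $L^1(G)=L^\infty(G)_*$, both of which are standard. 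Hence $\cO(H)\odot c_c(\wh G)$ is norm-dense in $\cA_{\rD(\phi)}$.

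The main obstacle is making the first two paragraphs precise: writing down the multiplicative unitary of $\rD(\phi)$ with its legs correctly placed, and then the bookkeeping showing that slicing by the stated functionals produces precisely $\cO(H)\odot c_c(\wh G)$ — in particular, controlling the $W^\phi$-twist so that it neither inflates the $C(H)$-content of a slice beyond $\cO(H)$ nor restricts the totality of slices to a proper subspace of $\cO(H)\odot c_c(\wh G)$. The remaining ingredients (the isometry of \cite[Lemma~1.6]{Daws-Kasprzak-Skalski-Soltan:closed} and the two density statements) are routine.
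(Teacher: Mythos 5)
Your proposal is correct and follows essentially the same route as the paper: slice the explicit Baaj--Vaes word for $W^{\rD(\phi)}$ by vector functionals attached to the algebraic cores $L^2_f(H)$ and $L^2_f(G)$, use that the blocks of $W^\phi$ are finite-dimensional unitary representations of $H$ with matrix entries in $\cO(H)$ so that the twist neither leaves nor fails to exhaust $\cO(H)\odot c_c(\wh G)$, and transfer density through the isometry $\cA_{\rD(\phi)}\cong L^\infty(\wh{\rD}(\phi))_*$. The one piece of bookkeeping you defer, controlling the $W^\phi$-legs, is handled in the paper by the single observation that the linear span of $c_c(\wh G)_1 W^{\phi*}_{21} L^2_f(H)_2$ equals $c_c(\wh G)\odot L^2_f(H)$, which follows from the invertibility of the transpose of each $U_\pi$.
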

	
	For a compact quantum group $G$ we write the dense subspace $L^2_f(G)\subset L^2(G)$ as the image of $\cO(G)$ via the map $\Lambda\colon L^\infty(G)\to L^2(G)$ from the GNS construction of the Haar state. 
	We define a norm-dense subspace $\cF(G)\subset \cB(L^2(G))_*$ by the linear span of the linear functionals of the form $\cB(L^2(G))\ni T\mapsto \langle\xi, T\eta\rangle\in \bC$ for $\xi,\eta\in L^2_f(G)$. 
	We have a symmetry $U^G\in\cU(L^2(G))$ defined by $U^G\Lambda(x):=\Lambda(R\sigma_{i/2}(x))$ for $x\in \cO(G)$, where $(\sigma_t)_{t\in\bR}$ is the modular automorphism group of the Haar state on $L^\infty(G)$ and $R$ is the unitary antipode of $G$. 
	This $U^G$ is the product of modular conjugations of the left Haar weights of $\wh{G}$ and $G$ and preserves $L^2_f(G)$. 
	Note that for a homomorphism $\phi^*\colon H\to G$ of compact quantum groups, 
	$W^{\phi}_{21}=\bigoplus\limits_{\pi\in \Irr G} (\Ad U^G\otimes\phi^*) (U_{\pi}^{\oplus\dim\cH_\pi})$ 
	via the irreducible decomposition of the right regular unitary representation on $L^2(G)$. 
	Thus $W^{\phi}$ and $W^{\phi*}$ restrict to mutually inverse linear endomorphisms on $L^2_f(H)\odot L^2_f(G)$. 
	
	\begin{proof}
		We note $W^{\rD(\phi)} = W^{H}_{13} U^H_3W^{\phi*}_{32}U^{H*}_3 W^{G*}_{42}W^{\phi*}_{32}$ by \cite[Theorem~5.3, Proposition~8.1]{Baaj-Vaes:doublecrossed}. It holds 
		\begin{align}\label{eq:densesubqd}
			&
			\begin{aligned}
				&
				\{ (\id^{\otimes2}\otimes\omega)(W^{\rD(\phi)}) \,|\,\omega\in \cF(H\times G) \}
				\\=&
				\{ (\id^{\otimes2}\otimes\omega) (W^{H}_{13} U^H_3W^{\phi*}_{32}U^{H*}_3 W^{G*}_{42}W^{\phi*}_{32}) \,|\,\omega\in \cF(H\times G) \}
				\\=&
				{\mathrm{span}}
				\{ (\id^{\otimes2}\otimes\omega) (W^{H}_{13} U^H_3W^{\phi*}_{32}U^{H*}_3 x_2W^{\phi*}_{32}) \,|\,\omega\in \cF(H), x\in c_c(\wh{G}) \}
				\\=&
				\cO(H)\otimes c_c(\wh{G}), 
			\end{aligned}
		\end{align}
		where we have used the fact that the linear span of $c_c(\wh{G})_1 W^{\phi*}_{21} L^2_f(H)_2$ is $c_c(\wh{G})\odot L^2_f(H)$, which follows from the invertibility of the transposition of $U_\pi$ for each $\pi\in\Irr G$. 
	\end{proof}
	
	We need the following auxiliary result, which the author could not find appropriate references. 
	For a locally compact quantum group $G$, we write $\cW^G\in \cU\cM(C^u_0(G)\otimes C_0(\wh{G}))$ for the lift of $W^{G}$, 
	given in such a way that $\cW^{G*}$ is a maximal unitary representation of $\wh{G}$ (see \cite[Section~3]{Kustermans:universal}, \cite[Section~5]{Woronowicz:mltu}). 
	We have the following injective contractive algebra homomorphism with norm-dense range, 
	\begin{align}\label{eq:embedcAGuniv}
		&
		\cA_{\rD(\phi)}\cong L^\infty(\wh{\rD}(\phi))_* \ni \omega\mapsto (\id\otimes\omega)(\cW^{\rD(\phi)})\in C^u_0(\rD(\phi)). 
	\end{align}
	
	\begin{prop}\label{prop:univQD}
		For a homomorphism $\phi\colon H\to G$ of compact quantum groups, 
		we have a $*$-isomorphism $f\colon C^u_0(\rD(\phi))\cong C^u(H)\otimes c_0(\wh{G})$ such that $(f\otimes f)\Delta^u_{\rD(\phi)} = (\sigma\Ad \cW^\phi)_{23}(\Delta^u_H\otimes\wh{\Delta}_{G})f$, 
		where $\cW^\phi:=(\phi^*\otimes\id)(\cW^G)$. 
	\end{prop}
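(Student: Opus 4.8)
The plan is to identify $(C^u_0(\rD(\phi)),\Delta^u_{\rD(\phi)})$ with $(A,\Delta_A)$, where $A:=C^u(H)\otimes c_0(\wh G)$ and $\Delta_A:=(\sigma\Ad\cW^\phi)_{23}(\Delta^u_H\otimes\wh\Delta_G)$, by exhibiting on $(A,\Delta_A)$ a ``universal multiplicative unitary'' relative to $\wh{\rD}(\phi)$; then $f$ is the induced isomorphism and the displayed formula merely records that $f$ intertwines the comultiplications. As a first step I would check that $(A,\Delta_A)$ makes sense. The element $\cW^\phi=(\phi^*\otimes\id)(\cW^G)\in\cU\cM(C^u(H)\otimes c_0(\wh G))$ is the universal lift of the bicharacter $W^\phi$; as the image of $\cW^G$ under the morphism $\phi$, it obeys the hexagon identities $(\Delta^u_H\otimes\id)(\cW^\phi)=\cW^\phi_{13}\cW^\phi_{23}$ and $(\id\otimes\wh\Delta_G)(\cW^\phi)=\cW^\phi_{13}\cW^\phi_{12}$, and it maps to $W^\phi$ under the canonical surjection $C^u(H)\to C(H)$ on the first leg. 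Feeding these identities, together with coassociativity of $\Delta^u_H$ and $\wh\Delta_G$, into the computation that shows $\Delta_{\rD(\phi)}$ is a coassociative comultiplication --- now carried out one level up --- shows that $\Delta_A$ is a non-degenerate coassociative $*$-homomorphism $A\to\cM(A\otimes A)$, intertwined with $\Delta_{\rD(\phi)}$ along the canonical surjection $A\to C(H)\otimes c_0(\wh G)=C_0(\rD(\phi))$.

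Next I would produce the multiplicative unitary. By \cite[Theorem~5.3, Proposition~8.1]{Baaj-Vaes:doublecrossed} one has $W^{\rD(\phi)}=W^{H}_{13}U^H_3W^{\phi*}_{32}U^{H*}_3W^{G*}_{42}W^{\phi*}_{32}$, where legs $1,2$ carry $C_0(H),C_0(\wh G)$ and legs $3,4$ carry $C_0(\wh H),C_0(G)$. Replacing only the factor $W^H_{13}$ by its universal lift $\cW^H_{13}$ produces a unitary $\cV\in\cU\cM(A\otimes C_0(\wh{\rD}(\phi)))$ mapping onto $W^{\rD(\phi)}$ under the canonical surjection on the first leg. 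The identity $(\id\otimes\wh\Delta_{\rD(\phi)})(\cV)=\cV_{13}\cV_{12}$ is inherited verbatim from the one for $W^{\rD(\phi)}$, since it concerns only the unaltered legs $3,4$; the identity $(\Delta_A\otimes\id)(\cV)=\cV_{13}\cV_{23}$ follows, via the hexagon identities above, from the same computation as for $W^{\rD(\phi)}$ with $\cW^H$ in place of $W^H$. Rerunning the computation \eqref{eq:densesubqd} from the proof of \cref{lem:denseFourier} with $\cV$ instead of $W^{\rD(\phi)}$ --- the only change being that the slices of $\cW^H$ by functionals in $\cF(H)$ now enter, and these recover $\cO(H)\subset C^u(H)$ just as those of $W^H$ recover $\cO(H)\subset C(H)$ --- then shows that $\{(\id^{\otimes2}\otimes\omega)(\cV)\}$ spans the dense subspace $\cO(H)\odot c_c(\wh G)$ of $A$.

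Now $\cV^*$ is a unitary representation of $\wh{\rD}(\phi)$ with coefficients in $A$ whose left slices are dense in $A$, so the universal property of $C^u_0(\rD(\phi))$ yields a $*$-homomorphism $f\colon C^u_0(\rD(\phi))\to A$ with $(f\otimes\id)(\cW^{\rD(\phi)})=\cV$; its range contains the dense subalgebra $\cO(H)\odot c_c(\wh G)$ and so is all of $A$, and composing $f$ with the canonical surjection $A\to C_0(\rD(\phi))$ recovers the canonical surjection $C^u_0(\rD(\phi))\to C_0(\rD(\phi))$. Applying $f\otimes f$ to $(\Delta^u_{\rD(\phi)}\otimes\id)(\cW^{\rD(\phi)})=\cW^{\rD(\phi)}_{13}\cW^{\rD(\phi)}_{23}$ and using $(\Delta_A\otimes\id)(\cV)=\cV_{13}\cV_{23}$ then gives $(f\otimes f)\Delta^u_{\rD(\phi)}=\Delta_A f$, which is the asserted identity.

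It remains to show $f$ is injective, equivalently that the C*-seminorm it induces on the $*$-algebra $\cO(H)\odot c_c(\wh G)$ is the largest C*-seminorm; this is the main obstacle. For it one must extend an arbitrary $*$-representation of $\cO(H)\odot c_c(\wh G)$ to $C^u_0(\rD(\phi))$: such a representation restricts to a $*$-representation of $\cO(H)$, hence of $C^u(H)=C^u_0(H)$, i.e.\ a unitary representation of $\wh H$, and to a $*$-representation of $c_c(\wh G)$, hence of $c_0(\wh G)=C^u_0(\wh G)$, i.e.\ a unitary representation of $G$, on a common Hilbert space with commuting ranges. Using the description of $\wh{\rD}(\phi)$ as a generalized quantum double of $\wh H$ and $G$, one must show these two representations combine --- through the twist by $\cW^\phi$ exactly as in the formula for $\cV$ above --- into a unitary representation of $\wh{\rD}(\phi)$, whose associated $*$-representation of $C^u_0(\rD(\phi))$ then recovers the original one after composition with $f$; this makes $f$ a $*$-isomorphism. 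This last verification is where the twisted structure of the quantum double is genuinely used, and some care is needed because $\cW^\phi$ really lifts only the $H$-leg: no completion of the $\wh G$-leg is at stake, $\wh G$ being discrete, so the universal C*-algebra in the statement is carried entirely by the factor $C^u(H)$.
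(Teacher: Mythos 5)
Your construction of the surjection $f$ coincides with the paper's: the lift you build by replacing $W^H_{13}$ with $\cW^H_{13}$ is exactly the unitary $\cX=\cW^H_{13}U^H_3W^{\phi*}_{32}U^{H*}_3W^{G*}_{42}W^{\phi*}_{32}$ used there, surjectivity follows by rerunning \cref{eq:densesubqd}, and the compatibility with the comultiplications is verified on this unitary in the same way. The genuine gap is the injectivity of $f$, which you rightly flag as the main obstacle but leave unproved; moreover the route you sketch points in the wrong logical direction. Extending an arbitrary $*$-representation of $\cO(H)\odot c_c(\wh{G})$ to $C^u_0(\rD(\phi))$ would only show that such representations are dominated by the norm of $C^u_0(\rD(\phi))$ on the dense subalgebra; since $f$ is contractive, this inequality is already known and gives no control of $\Ker f$. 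What is actually needed is the reverse domination: every representation of $C^u_0(\rD(\phi))$, equivalently every unitary representation of $\wh{\rD}(\phi)$, must factor through $f$, i.e.\ must arise as $(\rho\otimes\id)(\cX^*)$ for some representation $\rho$ of $C^u(H)\otimes c_0(\wh{G})$. That is the decomposition direction, not the reassembly direction, and it carries essentially the whole content of the proposition.

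The paper closes this by constructing an explicit inverse $g\colon C^u(H)\otimes c_0(\wh{G})\to C^u_0(\rD(\phi))$, and the key technical input is missing from your plan: one must show that the embedding \cref{eq:embedcAGuniv}, $\omega\mapsto(\id\otimes\omega)(\cW^{\rD(\phi)})$, is $*$-preserving on the subalgebra $\cO(H)\odot c_c(\wh{G})\subset\cA_{\rD(\phi)}$ (multiplicativity is automatic from the Fourier-algebra structure). The paper proves this by identifying the functional implementing the adjoint with $\omega^*\wh{S}$ via the antipode of $\wh{\rD}(\phi)$ and checking that the same relation persists for the universal lift $\cW^{\rD(\phi)}$. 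Granting this, $C^u(H)\otimes c_0(\wh{G})$ is the universal C*-completion of the $*$-algebra $\cO(H)\odot c_c(\wh{G})$ (your closing observation that $\wh{G}$ is discrete is exactly why no completion issue arises on that leg), so the identity on this $*$-algebra extends to a $*$-homomorphism $g$, and $fg=\id$ on a dense subalgebra forces $g$ to be bijective with $f=g^{-1}$. Without this $*$-preservation step, or an equivalent decomposition of arbitrary unitary representations of $\wh{\rD}(\phi)$ into commuting pairs, your argument does not establish that $f$ is an isomorphism.
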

	
	\begin{proof}
		We write $\cF(\wh{\rD}(\phi))$ for the image of $\cF(H\times G)$ in $L^\infty(\wh{\rD}(\phi))_*$ via the restriction. 
		Since the dense subalgebra $\cO(H)\odot c_c(\wh{H})\subset C_0(\rD(\phi))$ is $*$-closed, 
		for any $\omega\in \cF(\wh{\rD}(\phi))$, there is a unique $\wt{\omega}\in \cF(\wh{\rD}(\phi))$ such that $(\id\otimes\wt{\omega})(W^{\rD(\phi)*}) = (\id\otimes\omega)(W^{\rD(\phi)*})^*$ by \cref{eq:densesubqd}. 
		By \cite[Proposition~8.3]{Kustermans-Vaes:lcqg} and \cite[Theorem~1.6.4]{Woronowicz:mltu}, using the antipode $\wh{S}$ of $\wh{\rD}(\phi)$ we see the partial map $\omega^*\wh{S}$ densely defined on $C_0(\wh{\rD}(\phi))$ extends normally to $\wt{\omega}$ on $L^\infty(\wh{\rD}(\phi))$, where $\omega^*(x):=\overline{\omega(x^*)}$ for $x\in L^\infty(\wh{\rD}(\phi))$. 
		Indeed, for any $\nu\in L^\infty(\rD(\phi))_*$, 
		\begin{align*}
			&
			(\nu\otimes \wt{\omega})(W^{\rD(\phi)*}) 
			= (\nu\otimes\id) ( (\id\otimes\omega)(W^{\rD(\phi)*})^* ) 
			\\=& (\nu\otimes \omega^*)(W^{\rD(\phi)}) 
			=(\nu\otimes \omega^*\wh{S})(W^{\rD(\phi)*}) . 
		\end{align*}
		Therefore when we fix a faithful $*$-representation of $C^u_0(\rD(\phi))$ to some Hilbert space $\cH$, similarly we see for any $\nu\in \cB(\cH)_*$, 
		\begin{align*}
			&
			(\nu\otimes\wt{\omega})(\cW^{\rD(\phi)*}) 
			= (\nu\otimes\omega^*\wh{S})(\cW^{\rD(\phi)*}) 
			\\=& (\nu\otimes\omega^*)(\cW^{\rD(\phi)}) 
			= (\nu\otimes\id) ((\id\otimes\omega)(\cW^{\rD(\phi)*})^*) . 
		\end{align*}
		
		Therefore \cref{eq:embedcAGuniv} is $*$-linear on $\cO(H)\odot c_c(\wh{G})$, 
		which induces a surjective $*$-homomorphism $g\colon C^u(H)\otimes c_0(\wh{G})\to C^u_0(\rD(\phi))$ from the universal C*-completion. 
		Note that when we put 
		\begin{align*}
			&
			\cX:=\cW^H_{13} U^H_3W^{\phi*}_{32}U^{H*}_3 W^{G*}_{42}W^{\phi*}_{32} 
			\in \cU\cM(C^u(H)\otimes c_0(\wh{G})\otimes C_0(\wh{\rD}(\phi)) ) , 
		\end{align*}
		a similar calculation to \cref{eq:densesubqd} shows 
		\begin{align*}
			&
			\cO(H)\odot c_c(\wh{G})=\{ (\id\otimes\omega)(\cX) \,|\, \omega\in\cF(H\times G) \} 
		\end{align*}
		as a $*$-subalgebra of $C^u(H)\otimes c_0(\wh{G})$. 
		By construction, $(g\otimes\omega)(\cX) = (\id\otimes\omega)(\cW^{\rD(\phi)})$ 
		for $\omega\in\cF(\wh{\rD}(\phi))$. 
		
		On the other hand, when we take a faithful $*$-representation of $C^u(H)$ to some Hilbert space $\cH_1$, the unitary $\cX^*\in \cU\cM(\cK(\cH_1\otimes L^2(G))\otimes C_0(\wh{\rD}(\phi)) )$ is a unitary representation of $\wh{\rD}(\phi)$, and we have a surjective $*$-homomorphism $f\colon C^u_0(\rD(\phi))\to C^u(H)\otimes c_0(\wh{G})$ such that 
		$(f\otimes\id)(\cW^{\rD(\phi)})=\cX$ by the universality of $C^u_0(\rD(\phi))$. 
		Thus $fg=\id$ on $\cO(H)\odot c_c(\wh{G})$, and $g$ is also injective. 
		
		Finally, by checking 
		\begin{align*}
			&
			(\sigma\Ad \cW^{\phi})_{23} (\Delta^u_H\otimes\wh{\Delta}_G\otimes\id^{\otimes2}) (\cX) = \cX_{1256}\cX_{3456} , 
		\end{align*}
		the compatibility of $f$ and the comultiplications follows. 
	\end{proof}
	
	\begin{prop}\label{prop:DQGclVW}
		Let $\phi\colon H\to G$ be a homomorphism of compact quantum groups, $\Gamma$ be a discrete quantum group, and $\psi\colon\Gamma\to \rD(\phi)$ give a closed quantum subgroup in the sense of Woronowicz. 
		Then $\psi$ gives a closed quantum subgroup in the sense of Vaes. 
	\end{prop}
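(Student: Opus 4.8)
The plan is to detect the Vaes condition on the Fourier algebra, where the Woronowicz surjectivity becomes a statement about algebraic cores; since $H$ need not be coamenable, this is done with the Fourier-algebra techniques of \cite{Daws-Kasprzak-Skalski-Soltan:closed} rather than by reducing to one of the easy cases. Concretely: by \cref{prop:univQD} we may work with $C^u_0(\rD(\phi))\cong C^u(H)\otimes c_0(\wh G)$, and by \cref{lem:denseFourier} together with \eqref{eq:embedcAGuniv} the multiplier Hopf $*$-algebra $\cO(H)\odot c_c(\wh G)$ is a norm-dense $*$-subalgebra both of $C^u_0(\rD(\phi))$ and of $\cA_{\rD(\phi)}$. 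On the other side $\Gamma$ is discrete, so $C^u_0(\Gamma)=c_0(\Gamma)$ has algebraic core $c_c(\Gamma)$, and $c_c(\Gamma)$ is norm-dense in $\cA_\Gamma\cong L^\infty(\wh\Gamma)_*$ (the finitely supported functionals being norm-dense in the predual of the compact quantum group $\wh\Gamma$).

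\emph{From Woronowicz to a dense image.} The hypothesis says $\psi^*\colon C^u_0(\rD(\phi))\to c_0(\Gamma)$ is a surjective $*$-homomorphism intertwining the comultiplications; as $\psi$ respects the algebraic structures, $\psi^*$ restricts to a surjective morphism of multiplier Hopf $*$-algebras $\cO(H)\odot c_c(\wh G)\twoheadrightarrow c_c(\Gamma)$. Since a closed quantum subgroup in the sense of Woronowicz induces a bounded algebra homomorphism of Fourier algebras $\cA_{\rD(\phi)}\to\cA_\Gamma$ (compatible with the inclusions above, by \cite{Daws-Kasprzak-Skalski-Soltan:closed}), the image of this map contains $c_c(\Gamma)$ and is therefore norm-dense in $\cA_\Gamma$; dually, the attendant normal, unital, $*$-preserving homomorphism between $L^\infty(\wh\Gamma)$ and $L^\infty(\wh{\rD}(\phi))$ intertwining the comultiplications is injective.

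\emph{Closing up.} It remains to upgrade this to the full Vaes datum --- by \cite{Daws-Kasprzak-Skalski-Soltan:closed} this amounts to the map $\cA_{\rD(\phi)}\to\cA_\Gamma$ being onto, equivalently to the normal homomorphism just produced being a genuine $*$-algebra homomorphism witnessing $\Gamma$ as a closed quantum subgroup of $\rD(\phi)$ in the sense of Vaes. The point is that here the discreteness of $\Gamma$ --- so that $\wh\Gamma$ is compact with faithful Haar state --- together with the explicit description of $\wh{\rD}(\phi)$ from \cref{prop:univQD}, lets one identify the normal homomorphism, on the $w^*$-dense Hopf $*$-subalgebra $\cO(\wh\Gamma)$, with the $*$-homomorphism $\wh\psi^*$ attached to $\wh\psi\colon\wh{\rD}(\phi)\to\wh\Gamma$; hence it is multiplicative on $\cO(\wh\Gamma)$, and hence everywhere by a separate-$w^*$-continuity argument. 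This gives the claim.

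\emph{Main obstacle.} The essential difficulty is exactly the last step: in general Woronowicz is strictly weaker than Vaes precisely because the induced Fourier-algebra map can have norm-dense but proper image (equivalently, the dual normal map can fail to be multiplicative), so the real work is to rule this out by combining the discreteness of $\Gamma$ with the explicit form of $C^u_0(\rD(\phi))$ and $\cA_{\rD(\phi)}$ from \cref{prop:univQD} and \cref{lem:denseFourier}; the earlier steps are formal once that information and the Fourier-algebra functoriality of \cite{Daws-Kasprzak-Skalski-Soltan:closed} are in hand.
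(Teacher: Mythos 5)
Your overall architecture matches the paper's: identify $C^u_0(\rD(\phi))\cong C^u(H)\otimes c_0(\wh{G})$ via \cref{prop:univQD}, compare algebraic cores, and conclude via the Fourier-algebra criterion of \cite{Daws-Kasprzak-Skalski-Soltan:closed}. But the central step is asserted rather than proved. You claim that ``as $\psi$ respects the algebraic structures, $\psi^*$ restricts to a surjective morphism $\cO(H)\odot c_c(\wh{G})\twoheadrightarrow c_c(\Gamma)$.'' A surjective $*$-homomorphism of C*-algebras does not in general carry a prescribed dense subalgebra of the source onto a prescribed dense subalgebra of the target, and this is exactly where the paper does its work: $p_\pi:=\psi^*(1\otimes 1_{\cH_\pi})$ is a projection, central in $c_0(\Gamma)$ by surjectivity of $\psi^*$, hence supported on finitely many matrix blocks; therefore $\psi^*(C^u(H)\otimes\cB(\cH_\pi))=p_\pi c_0(\Gamma)p_\pi=p_\pi c_c(\Gamma)p_\pi$ is finite dimensional, so it coincides with its dense subspace $\psi^*(\cO(H)\otimes\cB(\cH_\pi))$; summing over $\pi$ (the $p_\pi$ add up to the unit of $\cM(c_0(\Gamma))$) gives $\psi^*(\cO(H)\odot c_c(\wh{G}))=c_c(\Gamma)$. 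The discreteness of $\wh{G}$ and of $\Gamma$ enter precisely here, and without this argument your proof has no content at its key joint.

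A second problem: your parenthetical claim that Woronowicz-closedness already induces a bounded homomorphism $\cA_{\rD(\phi)}\to\cA_\Gamma$ ``by \cite{Daws-Kasprzak-Skalski-Soltan:closed}'' is not a general fact. A priori the dual of $\wh{\psi}^*$ only maps $\cA_{\rD(\phi)}$ into the Fourier--Stieltjes algebra $C^u_0(\wh{\Gamma})^*$; landing in $\cA_\Gamma$ is part of what characterizes Vaes-closedness, so assuming it begs the question (and would make Woronowicz imply Vaes in complete generality). In the paper this containment is deduced from the core computation above: the dense subspace $\cO(H)\odot c_c(\wh{G})\subset\cA_{\rD(\phi)}$ (\cref{lem:denseFourier}) is sent into $c_c(\Gamma)\subset\cA_\Gamma$, and $\cA_\Gamma$ sits isometrically closed inside $C^u_0(\wh{\Gamma})^*$. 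Relatedly, your ``main obstacle'' is mislocated: once one knows $\psi^*(\cA_{\rD(\phi)})\subset\cA_\Gamma$ with dense image, Theorem~3.7 of \cite{Daws-Kasprzak-Skalski-Soltan:closed} yields Vaes-closedness outright; no separate multiplicativity or $w^*$-continuity upgrade is needed. The genuine difficulty is the finite-dimensionality argument for the cores, not the duality formalities.
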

	
	The proof can be done with a similar strategy as in \cite[Theorem~6.2]{Daws-Kasprzak-Skalski-Soltan:closed}. 
	Before the proof, we note that for a locally compact quantum group $G$, the canonical $*$-homomorphism with strongly dense range $C^u_0(\wh{G})\to C_0(\wh{G})\subset L^\infty(\wh{G})$ induces an isometry $\cA_G\cong L^\infty(\wh{G})_*\to C^u_0(\wh{G})^*$. 
	Also, using the lift $\fW^{G}\in\cU\cM(C^u_0(G)\otimes C^u_0(\wh{G}))$ of $W^{G}$ (see \cite[Section~3]{Kustermans:universal}, \cite[Section~4]{Meyer-Roy-Woronowicz:qghom}), we have an embedding 
	\begin{align}\label{eq:embedcBG}
		C^u_0(\wh{G})^*\ni \omega\mapsto (\id\otimes\omega)(\fW^{G})\in C^u_0(G) , 
	\end{align}
	similar to $\cA_G$, whose image regarded as the Fourier--Stieltjes algebra of $G$ (see the remark after \cite[Proposition~1.7]{Daws-Kasprzak-Skalski-Soltan:closed}). 
	
	\begin{proof}
		We identify $C^u(H)\otimes c_0(\wh{G})\cong C^u_0(\rD(\phi))$ by \cref{prop:univQD}. 
		Since $p_\pi:=\psi^*(1\otimes 1_{\cH_\pi})$ is a projection for each $\pi\in\Irr G$, it lies in some finite dimensional direct summand of $c_0(\Gamma)$. 
		Also, $p_\pi\in \cZ(c_0(\Gamma))$ by the surjectivity of $\psi^*$. 
		Moreover, 
		\begin{align*}
			&
			\psi^*(C^u(H)\otimes\cB(\cH_\pi)) = p_\pi\psi^*(C^u(H)\otimes c_0(\wh{G}))p_\pi = p_\pi c_0(\Gamma)p_\pi = p_\pi c_c(\Gamma)p_\pi , 
		\end{align*}
		which coincides with $\psi^*(\cO(H)\otimes\cB(\cH_\pi))$ because a dense subspace of a finite dimensional Banach space is the whole. 
		Since $\bigoplus\limits_{\pi\in\Irr G}p_\pi$ gives the unit in $\cM(c_0(\Gamma))$, it follows 
		$\psi^*(\cO(H)\odot c_c(\wh{G}))= c_c(\Gamma)$. 
		
		If we consider the map $C^u_0(\wh{\rD}(\phi))^*\to C^u_0(\wh{\Gamma})^*$ defined by the linear dual of $\wh{\psi}^*$, 
		we can check it coincides with the restriction of $\psi^*\colon C^u_0(\rD(\phi))\to c_0(\Gamma)$ via the embeddings \cref{eq:embedcBG} by using $(\psi^*\otimes\id)(\fW^{\rD(\phi)})=(\id\otimes\wh{\psi}^*)(\fW^{\Gamma})$. 
		Thus by considering their restrictions to $L^\infty(\wh{\rD}(\phi))_*\cong \cA_{\rD(\phi)}$, we see $\psi^*(\cA_{\rD(\phi)})\subset\cA_{\Gamma}$ is norm-dense by \cref{lem:denseFourier}. 
		Hence $\psi$ gives a closed quantum subgroup in the sense of Vaes by \cite[Theorem~3.7]{Daws-Kasprzak-Skalski-Soltan:closed}. 
	\end{proof}
	
	A compact quantum group $G$ is called of \emph{Kac type} if its Haar state is tracial. 
	For a compact quantum group $G$, consider the quotient C*-algebra $A$ of $C^u(G)$ by the closed ideal generated by all $x\in C^u(G)$ such that $\tau(x^*x)=0$ for every tracial state $\tau$ on $C^u(G)$. Then $\Delta_G$ induces a well-defined comultiplication on $A$, which gives rise to a Kac type compact quantum group $H$. See \cite[Appendix~A]{Soltan:quantumBohr} for detail. 
	The quotient map induces a homomorphism $\iota\colon H\to G$ giving a closed quantum subgroup. 
	We say $H$ is the maximal Kac quantum subgroup of $G$. 
	
	\begin{lem}\label{lem:DQGmKac}
		Let $G$ be a compact quantum group and $\iota\colon H\to G$ be its maximal Kac quantum subgroup. 
		Then a discrete quantum subgroup of $\rD(G)$ is a discrete quantum subgroup of $\rD(\iota)$. 
	\end{lem}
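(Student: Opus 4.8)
The plan is to pass to the universal C*-algebra level, where the maximal Kac quantum subgroup is defined, and to exploit that a discrete quantum group carries a faithful family of traces on each block.

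First I would produce the ambient inclusion of $\rD(\iota)$ into $\rD(G)$. Since $\id_G\colon G\to G$ and $\iota\colon H\to G$ are homomorphisms, the functoriality of generalized quantum doubles recalled in \cref{sec:preliminary} furnishes a homomorphism $j\colon\rD(\id_G\circ\id_G\circ\iota)=\rD(\iota)\to\rD(\id_G)=\rD(G)$, which is a closed quantum subgroup in the sense of Vaes because $\wh{\id_G}=\id_{\wh G}$ and $\iota$ both are; in particular $j^*$ is surjective. Under the identifications $C^u_0(\rD(G))\cong C^u(G)\otimes c_0(\wh G)$ and $C^u_0(\rD(\iota))\cong C^u(H)\otimes c_0(\wh G)$ of \cref{prop:univQD}, a direct inspection of the construction of $j$ via \cite{Baaj-Vaes:doublecrossed,Roy:double} identifies $j^*$ with $\iota^*\otimes\id_{c_0(\wh G)}$, so that $\ker j^*=\ker\iota^*\otimes c_0(\wh G)$ (here $c_0(\wh G)$ is nuclear). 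Consequently, for a discrete quantum subgroup $\psi\colon\Gamma\to\rD(G)$ it suffices to show $\ker\iota^*\otimes c_0(\wh G)\subseteq\ker\psi^*$: then $\psi^*$ descends along the surjection $j^*$ to a non-degenerate $*$-homomorphism $\wt{\psi}^*\colon C^u_0(\rD(\iota))\to C^u_0(\Gamma)$ that is again surjective and intertwines the comultiplications, and the corresponding homomorphism $\wt{\psi}\colon\Gamma\to\rD(\iota)$ is a discrete quantum subgroup (automatically in the sense of Vaes by \cref{prop:DQGclVW}) with $j\wt{\psi}=\psi$.

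For the remaining ideal inclusion I would argue one block at a time. Note $C^u_0(\Gamma)=c_0(\Gamma)$, since discrete quantum groups are coamenable. As in the proof of \cref{prop:DQGclVW}, $p_\pi:=\psi^*(1\otimes 1_{\cH_\pi})$ is a central projection in $c_0(\Gamma)$ for each $\pi\in\Irr G$, hence supported on finitely many blocks, so the corner $p_\pi c_0(\Gamma)$ is a finite-dimensional C*-algebra, and $a\mapsto\psi^*(a\otimes 1_{\cH_\pi})$ is a unital $*$-homomorphism $\theta_\pi\colon C^u(G)\to p_\pi c_0(\Gamma)$. By definition of the maximal Kac quantum subgroup, $\ker\iota^*$ is the closed ideal of $C^u(G)$ generated by the elements $x$ with $\tau(x^*x)=0$ for every tracial state $\tau$ on $C^u(G)$; so if $x\in\ker\iota^*$ and $\tau$ is a tracial state on $p_\pi c_0(\Gamma)$, then $\tau\circ\theta_\pi$ is a tracial state on $C^u(G)$, whence $\tau(\theta_\pi(x)^*\theta_\pi(x))=0$, and letting $\tau$ run over the normalized matrix traces of the finite-dimensional algebra $p_\pi c_0(\Gamma)$ forces $\theta_\pi(x)=0$. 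Thus $\ker\iota^*\subseteq\ker\theta_\pi$, and for $a\in\ker\iota^*$ and $x\in\cB(\cH_\pi)\subset c_0(\wh G)$ we get $\psi^*(a\otimes x)=\psi^*(a\otimes 1_{\cH_\pi})\psi^*(1\otimes x)=\theta_\pi(a)\psi^*(1\otimes x)=0$. Summing over blocks and taking norm closures shows $\psi^*$ annihilates $\ker\iota^*\otimes c_0(\wh G)$, as required.

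The substance lies in the third paragraph: the finite support of each $p_\pi$ is what makes matrix traces available, and the trace-theoretic definition of the maximal Kac quantum subgroup then forces $\theta_\pi$ to kill $\ker\iota^*$. The remaining ingredients — the existence of $j$, its behaviour under \cref{prop:univQD}, and the descent of $\psi^*$ — are essentially bookkeeping; the one spot that needs to be written carefully, though it involves no new idea, is the identification $j^*=\iota^*\otimes\id_{c_0(\wh G)}$, which I would extract from the formulas in \cite{Baaj-Vaes:doublecrossed,Roy:double}.
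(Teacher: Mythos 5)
Your proposal is correct and takes essentially the same route as the paper: both hinge on the observation that $\psi^*(1\otimes 1_{\cH_\pi})$ is a (central) projection in $c_0(\Gamma)$, so each block map $a\mapsto\psi^*(a\otimes 1_{\cH_\pi})$ is a finite-dimensional $*$-representation of $C^u(G)$, and such representations factor through the maximal Kac quotient. The only difference is cosmetic: the paper cites the remark after \cite[Proposition~A.1]{Soltan:quantumBohr} for that last factorization, whereas you reprove it inline with matrix traces, and you spell out the bookkeeping identification $j^*=\iota^*\otimes\id$ that the paper leaves implicit.
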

	
	\begin{proof}
		Let $\phi\colon \Gamma\to \rD(G)$ give a discrete quantum subgroup. 
		By the first paragraph of the proof of \cref{prop:DQGclVW}, $\phi^*(C^u(G)\otimes \cB(\cH_\pi))$ is finite dimensional for each $\pi\in\Irr G$. 
		By definition, a finite dimensional $*$-representation of $C^u(G)$ factors through $A$ above and thus through $C^u(H)$ (see the remark after \cite[Proposition~A.1]{Soltan:quantumBohr}). 
		Thus $\phi^*$ factors through $\iota^*\otimes \id\colon C^u(G)\otimes c_0(\wh{G})\to C^u(H)\otimes c_0(\wh{G})$. 
	\end{proof}
	
	\begin{rem}\label{rem:subDQGtorus}
		When $G$ is $K_q$, its maximal Kac quantum subgroup is $T$ by \cite[Lemma~4.10]{Tomatsu:coidealPoissonbdry}, 
		which uses the classification of irreducible $*$-representations of $C(K_q)$, \cite[Section~3.6]{Korogodski-Soibelman1998:bookptI}. 
		A consequence of the classification result is that every finite dimensional irreducible $*$-representation of $C(K_q)$ is given by an evaluation at some point in $T$. 
		Combined with the finite dimensionality of $\phi^*(C(K_q)\otimes \cB(\cH_\pi))$, this fact gives a more direct proof of \cref{lem:DQGmKac} in the case of $\rD(K_q)$. 
		It follows a discrete quantum subgroup of $\rD(K_q)$ is that of $\rD(\iota_q)$. 
	\end{rem}
	
	Let $\phi\colon H\to G$ be a homomorphism of locally compact quantum groups. 
	Then we have an injective normal $*$-homomorphism $\Delta_\phi\colon L^\infty(G)\to L^\infty(G)\barotimes L^\infty(H)$ such that $(\Delta_\phi\otimes\id) (W^G)=W^{G}_{13}W^{\phi}_{23}$. 
	It holds $(\id\otimes\Delta_H)\Delta_\phi=(\Delta_\phi\otimes\id)\Delta_\phi$. 
	We refer \cite{Meyer-Roy-Woronowicz:qghom} for further properties. 
	We write $L^\infty(G)^H:=\{ x\in L^\infty(G) \,|\, \Delta_\phi(x)=x\otimes 1 \}$. 
	Then $\Delta_G$ restricts to a well-defined map $L^\infty(G)^H\to L^\infty(G)\barotimes L^\infty(G)^H$. 
	A normal state $\theta$ on $L^\infty(G)^H$ is called $G$-invariant if $(\omega\otimes \theta)\Delta_G = \theta$ for any normal state $\omega$ on $L^\infty(G)$. 
	In \cite[Definition~5.5]{Brannan-Chirvasitu-Viselter2020:actionslattices}, a lattice of a locally compact quantum group $G$ is defined to be a discrete quantum subgroup $\Gamma$ admitting a $G$-invariant normal state on $L^\infty(G)^\Gamma$. 
	
	\begin{thm}\label{thm:nolattice}
		For a compact quantum group $G$, there is a lattice of $\rD(G)$ in the sense of \cite{Brannan-Chirvasitu-Viselter2020:actionslattices} if and only if $G$ is of Kac type. 
	\end{thm}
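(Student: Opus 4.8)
The plan is to prove both implications separately, with the ``if'' direction following immediately from the literature and the ``only if'' direction being the substantive part. For the easy direction: if $G$ is of Kac type, then $\rD(G)$ admits $\wh{G}$ as a lattice. This is precisely the content of the Brannan--Chirvasitu--Viselter result cited in the introduction, so one only needs to recall that $\wh{G}$ sits inside $\rD(G)=\rD(\id_G)$ as a discrete quantum subgroup and that the relevant invariant state exists. One can take $L^\infty(\rD(G))^{\wh{G}}\cong L^\infty(G)$ (the copy coming from the first tensor leg being fixed by the $\wh{G}$-action) and the Haar state, which is tracial by the Kac assumption, furnishes the required $\rD(G)$-invariant normal state.

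For the ``only if'' direction I would argue by contraposition: assuming $G$ is \emph{not} of Kac type, I want to show no discrete quantum subgroup $\Gamma$ of $\rD(G)$ can carry a $\rD(G)$-invariant normal state on $L^\infty(\rD(G))^\Gamma$. The first step is to invoke \cref{lem:DQGmKac}: any discrete quantum subgroup of $\rD(G)$ factors through $\rD(\iota)\to\rD(G)$, where $\iota\colon H\to G$ is the maximal Kac quantum subgroup. Since $G$ is not Kac, $H$ is a proper quantum subgroup, so $\Irr H$ is strictly smaller than $\Irr G$; more precisely, the surjection $\iota^*\colon C^u(G)\to C^u(H)$ is not injective. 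The key point is then that the fixed-point algebra sees all of $\rD(\iota)$ rather than just $\rD(G)$: because $\Gamma$ sits inside $\rD(\iota)$ and $\rD(\iota)$ itself is a genuinely ``smaller'' quantum double, one should compare the Haar state (or its lack) on the relevant corners. Concretely, invariance of a normal state under the full $\rD(G)$-action, restricted along $\rD(\iota)\to\rD(G)$, forces compatibility with the Haar weights, and the non-traciality of the Haar state on $G$ (equivalently, the nontriviality of the modular element / the scaling automorphisms) obstructs the existence of such an invariant \emph{state} as opposed to a weight.

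The technical heart, and the step I expect to be the main obstacle, is making precise why non-Kac-ness of $G$ is incompatible with a normal invariant \emph{state} (finite) on $L^\infty(\rD(G))^\Gamma$. The natural candidate for an invariant functional on the fixed-point algebra is built from the Haar weights of $\rD(\phi)$, and $\rD(\phi)$ is non-unimodular precisely when $G$ is non-Kac; a non-unimodular locally compact quantum group has its Haar weight genuinely unbounded on any globally invariant piece that is large enough to detect the modular structure. So I would: (i) identify $L^\infty(\rD(G))^\Gamma$ explicitly using $\Delta_\phi$ and the description $C_0(\rD(\phi))=C_0(H)\otimes C_0(\wh{G})$ from the preliminaries, showing it contains a copy of $\ell^\infty$ over the (infinite) set $\Irr G$ or of $L^\infty(\wh{G})$ on which the modular automorphism group acts nontrivially; (ii) show any $\rD(G)$-invariant normal state on it would have to be a finite trace-like functional there, contradicting non-traciality; and (iii) handle the subtlety that $\Gamma$ may be much smaller than $\wh{G}$, so one must check the fixed-point algebra is still ``big enough'' — here one uses that $\Gamma$ is discrete and the structure of $\rD(\phi)$ to see that $L^\infty(\rD(\phi))^\Gamma$ always retains the problematic modular part coming from $L^\infty(G)$ not being a trace. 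Assembling (i)--(iii) with \cref{lem:DQGmKac} gives the contrapositive, completing the proof.
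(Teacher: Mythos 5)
Your easy direction and your first reduction are fine and match the paper: when $G$ is Kac, $\wh{G}$ is a lattice by Brannan--Chirvasitu--Viselter, and in the converse direction \cref{lem:DQGmKac} correctly places any discrete quantum subgroup $\Gamma$ inside $\rD(\iota)$ for the maximal Kac quantum subgroup $\iota\colon H\to G$, so that $L^\infty(\rD(G))^{\rD(\iota)}=L^\infty(G)^H\otimes 1\subset L^\infty(\rD(G))^\Gamma$. But from that point on your steps (i)--(iii) rest on the wrong mechanism, and this is a genuine gap. The relevant subalgebra is $L^\infty(G)^H\otimes 1$, sitting in the \emph{first} leg; it is not a copy of $\ell^\infty(\Irr G)$ or of $L^\infty(\wh{G})$, and the Haar state of $G$ restricts to a perfectly finite normal state on it. So neither non-unimodularity of $\rD(G)$ nor ``unboundedness of the Haar weight on a large invariant piece'' produces a contradiction here: finiteness is not the obstruction, and ``non-traciality of $h$ on $L^\infty(G)$'' does not by itself say anything about $h$ restricted to the possibly much smaller algebra $L^\infty(G)^H$.

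The actual argument has two steps you are missing. First, $G$-invariance of the hypothetical state $\theta$ pins it down completely on $L^\infty(G)^H$: by right invariance of the Haar state, $\theta=(h\otimes\theta)\Delta_G=h$ there, so the only candidate is the restricted Haar state. Second, $\rD(G)$-invariance also forces $\wh{G}$-invariance, i.e.\ $(\id\otimes\theta)\Ad W^G_{21}(1\otimes x)=\theta(x)1$, and the substantive input is \cref{lem:invstateab}: for a Kac closed quantum subgroup $H\leq G$, the restriction of $h$ to $L^\infty(G)^H$ is $\wh{G}$-invariant if and only if $G$ is Kac. That lemma is not modular-theoretic bookkeeping; it reduces, by averaging matrix coefficients over $H$ (using that $H$ is Kac so that the transposed corepresentations remain unitary), to Izumi's theorem on noncommutative Poisson boundaries, which handles the case $H=\{e\}$. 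Without this ingredient your step (ii) is circular --- you would be asserting the conclusion (``an invariant state must be trace-like, contradicting non-traciality'') rather than proving it.
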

	
	For a homomorphism $\phi\colon H\to G$ giving a closed quantum subgroup, it holds $L^\infty(\rD(G))^{\rD(\phi)}=L^\infty(G)^H\otimes 1_{L^\infty(\wh{G})}$. 
	Also, we can check that a normal state $\theta$ on $L^\infty(G)^H$ is $\rD(G)$-invariant if and only if it is $G$-invariant and $\wh{G}$-invariant in the sense of $(\id\otimes \theta)\Ad W^G_{21}(1\otimes x)=\theta(x)1$ for any $x\in L^\infty(G)^H$. 
	
	\begin{proof}
		When $G$ is of Kac type, $\wh{G}$ is a lattice by \cite[Theorem~5.6]{Brannan-Chirvasitu-Viselter2020:actionslattices}. 
		Assume $G$ is not of Kac type and let $\iota\colon H\to G$ be a maximal Kac quantum subgroup. 
		Suppose $\Gamma$ is a lattice of $\rD(G)$ in this sense, with a $\rD(G)$-invariant normal state $\theta$ on $L^\infty(\rD(G))^\Gamma$. 
		Then $\theta$ restricts to a $G$-invariant state on 
		$L^\infty(G)^H=L^\infty(\rD(G))^{\rD(\iota)}$ by \cref{lem:DQGmKac}. 
		For the Haar state $h$ of $G$, it holds $\theta=(h\otimes \theta) \Delta_{G}=h$ on $L^\infty(G)^H$. 
		Since the restriction of $h$ on $L^\infty(G)^H$ is not $\wh{G}$-invariant by the following \cref{lem:invstateab}, we get a contradiction. 
	\end{proof}
	
	\begin{lem}\label{lem:invstateab}
		Let $G$ and $H$ be compact quantum groups with $H$ being of Kac type and $\iota\colon H\to G$ give a closed quantum subgroup. 
		The restriction of the Haar state $h$ of $G$ to $L^\infty(G)^H$ is $\wh{G}$-invariant if and only if $G$ is of Kac type. 
	\end{lem}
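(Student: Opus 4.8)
The plan is to prove the two implications separately, with the ``only if'' direction being the substantive one. Throughout write $h$ (resp.\ $h_H$) for the Haar state of $G$ (resp.\ $H$), and recall from the discussion preceding \cref{thm:nolattice} that $\wh{G}$-invariance of a normal state $\theta$ on $L^\infty(G)^H$ means $(\id\otimes\theta)\Ad W^G_{21}(1\otimes x)=\theta(x)1$ for all $x\in L^\infty(G)^H$. Using the block decomposition $\ell^\infty(\wh G)=\prod_{\pi\in\Irr G}\cB(\cH_\pi)$ and identifying $W^G$ blockwise with the regular representations, the $\pi$-block of $\Ad W^G_{21}(1\otimes x)$ is $(U_\pi)_{21}(1\otimes x)(U_\pi)_{21}^*$, so the condition becomes, in terms of the matrix coefficients $u^\pi_{ij}$ of $\pi$ in an orthonormal basis,
\begin{equation*}
\sum_j h\bigl(u^\pi_{ij}\,x\,(u^\pi_{kj})^*\bigr)=h(x)\,\delta_{ik}\qquad\text{for all }\pi\in\Irr G\text{ and }x\in L^\infty(G)^H.
\end{equation*}

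For the ``if'' part, suppose $G$ is of Kac type. Then $h$ is a trace and every conjugate representation is again unitary, so $\sum_j (u^\pi_{ij})^*u^\pi_{kj}=\delta_{ik}$, and taking adjoints also $\sum_j (u^\pi_{kj})^*u^\pi_{ij}=\delta_{ik}$. Hence for every $x\in L^\infty(G)$, cycling in the trace gives $\sum_j h(u^\pi_{ij}x(u^\pi_{kj})^*)=h\bigl((\sum_j (u^\pi_{kj})^*u^\pi_{ij})\,x\bigr)=h(x)\delta_{ik}$, so the displayed identity holds on all of $L^\infty(G)$, in particular on $L^\infty(G)^H$; thus $h|_{L^\infty(G)^H}$ is $\wh G$-invariant.

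For the ``only if'' part, assume $h|_{L^\infty(G)^H}$ is $\wh G$-invariant and suppose, for contradiction, that $G$ is not of Kac type, so there is $\pi\in\Irr G$ with $F_\pi\neq 1$, where $F_\pi$ denotes the Woronowicz $F$-matrix of $\pi$. First I would reduce to $H$ being the maximal Kac quantum subgroup $H_{\max}$ of $G$: since $H$ is of Kac type and $\iota$ gives a closed quantum subgroup, every finite-dimensional $*$-representation of $C^u(G)$ factors through $C^u(H_{\max})$ and $h_H\circ\iota^*$ is a tracial state on $C^u(G)$, which forces $\iota$ to factor through $H_{\max}$; consequently $L^\infty(G)^{H_{\max}}\subseteq L^\infty(G)^H$, so the displayed identity continues to hold for all $x\in L^\infty(G)^{H_{\max}}$ and we may take $H=H_{\max}$. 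Next I would produce a suitable test element inside $L^\infty(G)^{H_{\max}}$: because some irreducible has $F\neq 1$ and $H_{\max}$ is maximal among Kac quantum subgroups, one can arrange an irreducible $\pi'$ with $F_{\pi'}\neq 1$ whose restriction to $H_{\max}$ is reducible, so that $\pi'\otimes\overline{\pi'}$ contains a non-trivial irreducible $\sigma$ with an $H_{\max}$-fixed vector; a matrix coefficient $x=u^\sigma_{v,w}$ with $w$ an $H_{\max}$-fixed vector then lies in $L^\infty(G)^{H_{\max}}$, with $\sigma\subseteq\pi'\otimes\overline{\pi'}$ non-trivial. Finally, using the KMS relation for $h$ in the form $\sigma^h_{-i}(u^{\pi'}_{ij})=(F_{\pi'}u^{\pi'}F_{\pi'})_{ij}$ together with the Schur orthogonality relations for $h$, I would evaluate $\sum_j h(u^{\pi'}_{ij}x(u^{\pi'}_{kj})^*)$ explicitly and see that, because $\sigma$ is a non-trivial constituent of $\pi'\otimes\overline{\pi'}$, the outcome is a nonzero element of $\cB(\cH_{\pi'})$ precisely when $F_{\pi'}\neq 1$; this contradicts $h(x)\delta_{ik}=0$. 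Therefore $F_\pi=1$ for every $\pi\in\Irr G$, i.e.\ $G$ is of Kac type.

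I expect the main obstacle to be the last two steps of the ``only if'' direction taken together: one must verify that the proper subalgebra $L^\infty(G)^{H_{\max}}$ — not merely $L^\infty(G)$ — still contains matrix coefficients witnessing the failure of the displayed identity (this is exactly where the Kac hypothesis on $H$ is essential: for a non-Kac closed quantum subgroup such as $H=G$, the algebra $L^\infty(G)^H$ can reduce to $\bC$ and the statement genuinely fails), and one must carry out the Schur-orthogonality bookkeeping carefully enough to isolate the factor that is nonzero exactly when $F_{\pi'}\neq 1$. The combinatorial input that some $\pi'$ with $F_{\pi'}\neq 1$ restricts reducibly to $H_{\max}$ — equivalently that $\pi'\otimes\overline{\pi'}$ acquires a non-trivial $H_{\max}$-spherical constituent — should follow from multiplicativity of the $F$-matrices ($F_{\rho\otimes\rho}=F_\rho\otimes F_\rho$) and the maximality of $H_{\max}$ among Kac quantum subgroups, but making this precise is the crux.
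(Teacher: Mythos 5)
Your reformulation of $\wh{G}$-invariance as $\sum_j h\bigl(u^\pi_{ij}\,x\,(u^\pi_{kj})^*\bigr)=h(x)\delta_{ik}$ for $x\in L^\infty(G)^H$ is correct, the ``if'' direction is fine, and the reduction to the maximal Kac quantum subgroup $H_{\max}$ is legitimate in spirit (though the relevant fact is the universal property of the canonical Kac quotient among Kac type closed quantum subgroups, not the factorization of finite-dimensional representations, which says nothing about the typically infinite-dimensional $\iota^*$). The genuine gap is exactly where you place it: the existence of the test element. You need an irreducible $\pi'$ with $F_{\pi'}\neq 1$ together with a \emph{non-trivial} irreducible constituent $\sigma$ of $\pi'\otimes\overline{\pi'}$ admitting an $H_{\max}$-fixed vector, and you then need the Schur-orthogonality evaluation to come out nonzero. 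Neither is established; ``multiplicativity of the $F$-matrices and maximality of $H_{\max}$'' is an expectation, not an argument, and a priori every $\pi'$ with $F_{\pi'}\neq 1$ could restrict irreducibly to $H_{\max}$, in which case the only $H_{\max}$-spherical constituent of $\pi'\otimes\overline{\pi'}$ is the trivial one and your construction produces no test element at all. As written, the ``only if'' direction does not go through.

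The paper sidesteps this existence problem entirely by arguing in the opposite direction: instead of locating a violation inside $L^\infty(G)^H$, it shows that $\wh{G}$-invariance of $h$ on $L^\infty(G)^H$ already forces $\wh{G}$-invariance on all of $L^\infty(G)$. Concretely, for $x\in\cO(G)$ one forms the average $y:=(\id\otimes h_H)\Delta_\iota(x)\in L^\infty(G)^H$; since $H$ is of Kac type the transposed matrix $(\iota^*(u_{j,i}))_{i,j}$ is again unitary, and a short computation using the invariance of $h$ under $(\id\otimes h_H\iota^*)\Delta_G$ yields $h\bigl(\sum_k u_{i,k}xu_{j,k}^*\bigr)=h\bigl(\sum_l u_{i,l}yu_{j,l}^*\bigr)=\delta_{i,j}h(y)=\delta_{i,j}h(x)$. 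One then quotes the case of trivial $H$ (Izumi, \cite[Corollary~3.9]{Izumi2002:noncommPoisson}), which is precisely the statement that $\wh{G}$-invariance of $h$ on all of $L^\infty(G)$ characterizes the Kac property. If you want to salvage your direct approach, you must prove the existence statement above; it is true a posteriori (it follows from the lemma itself), but I see no route to it that is easier than the averaging argument.
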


	\begin{proof}
		If $H$ is trivial, this is due to \cite[Corollary~3.9]{Izumi2002:noncommPoisson} (see also \cite[Lemma~5.2]{Kalantar-Kasprzak-Skalski-Vergnioux2022:noncommutative}). Thus it suffices to show the Haar state $h$ on $L^\infty(G)$ is $\wh{G}$-invariant by assuming its restriction on $L^\infty(G)^H$ is $\wh{G}$-invariant. 
		It is enough to check $(\id\otimes h)\Ad U_\pi(1\otimes x)=1\otimes h(x)$ for all $\pi\in \Irr G$ and $x\in L^\infty(G)$ by the expression of $W^G_{21}$ before the proof of \cref{lem:denseFourier}. 
		
		For any $x\in \cO(G)$ and $\pi\in\Irr G$, we put the average $y:=(\id\otimes h_H)\Delta_\iota (x)\in L^\infty(G)^H$ over the Haar state $h_H$ of $H$ and 
		we take an orthonormal basis of $\cH_{\pi}$ to express $U_\pi=(u_{i,j})_{i,j}$. 
		Since $H$ is of Kac type, $(\iota^*(u_{j,i}))_{i,j}\in \cB(\cH_\pi)\otimes C(H)$ is also unitary. 
		Thus by using $\Delta_\iota = (\id\otimes\iota^*)\Delta_G$ on $\cO(G)$, we check for each $i,j$, 
		\begin{align*}
			&
			h\Bigl(\sum\limits_{k=1}^{\dim\cH_\pi} u_{i,k}xu^{*}_{j,k}\Bigr) 
			=
			(h\otimes h_H\iota^*)\Delta_G\Bigl(\sum\limits_{k=1}^{\dim\cH_\pi} u_{i,k}xu^{*}_{j,k}\Bigr) 
			\\=&
			(h\otimes h_H)\Bigl( \sum\limits_{k,l=1}^{\dim\cH_\pi} (u_{i,l}\otimes \iota^*(u_{l,k})) \Delta_{\iota}(x) (u^{*}_{j,l}\otimes \iota^*(u_{l,k}^*)) \Bigr) 
			\\=&
			(h\otimes h_H)\Bigl( \sum\limits_{k,l=1}^{\dim\cH_\pi} (u_{i,l}\otimes 1) \Delta_{\iota}(x) (u^{*}_{j,l}\otimes \iota^*(u_{l,k}^*u_{l,k})) \Bigr) 
			\\=&
			h\Bigl(\sum\limits_{l=1}^{\dim\cH_\pi} u_{i,l} y u^{*}_{j,l}\Bigr) 
			=
			\delta_{i,j} h(y)
			=
			\delta_{i,j} h(x). 
			\qedhere\end{align*}
	\end{proof}
	
	\section{Classification}\label{sec:classify}
	
	From now, we assume $\Rep(G)$ for a compact quantum group $G$ is \emph{skeletal} in the sense that each isomorphism class of objects in $\Rep(G)$ consists of only one object in $\Rep(G)$, by replacing $\Rep(G)$ with a monoidally equivalent one if necessary, for simplicity. 
	For a homomorphism $\phi\colon H\to G$ of locally compact quantum groups giving a closed quantum subgroup in the sense of Woronowicz, 
	we sometimes identify $C^u_0(H)$ as a quotient $C^u_0(G)/\Ker \phi^*$ up to a $*$-isomorphism preserving comultiplications and write $H\leq G$ to emphasize this identification. 
	The following lemma would be folklore, but we include the proof for convenience of the reader. 
	
	\begin{lem}\label{lem:clsubDQGfusion}
		For a compact quantum group $G$, we have one-to-one correspondences between the following. 
		\begin{enumerate}
			\item
			A closed quantum subgroup of $\wh{G}$ up to the equivalence relation that identifies 
			two closed quantum subgroup $\phi_i\colon \Gamma_i\to \wh{G}$ for $i=1,2$ 
			if there is a homomorphism $\psi\colon \Gamma_1\to \Gamma_2$ with invertible $\psi^*$ such that $\phi_2\psi=\phi_1$. 
			\item
			An idempotent complete rigid C*-tensor full subcategory $\cC$ of $\Rep G$. 
			\item
			A subset $S$ of $\Irr G$ that is closed under conjugations of representations such that $\bZ^{\oplus S}\subset R(G)$ is a unital subring. 
		\end{enumerate}
	\end{lem}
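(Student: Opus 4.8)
The plan is to go around the cycle $(1)\Rightarrow(2)\Rightarrow(3)\Rightarrow(1)$, using the Tannaka–Krein reconstruction dictionary between discrete quantum groups and their representation categories. Recall that a closed quantum subgroup $\phi\colon\Gamma\to\wh{G}$ in the sense of Woronowicz is the same as a surjection $\phi^*\colon C^u_0(\wh{G})\to C^u_0(\Gamma)$ compatible with comultiplications, i.e.\ a compact quantum group $\Gamma$ which is a "quantum quotient" of $G$; dually, on the level of representation categories a quotient map $C^u(G)\to C^u(\Gamma)$ corresponds to the inclusion of $\Rep\Gamma$ into $\Rep G$ as a full subcategory closed under the relevant operations, since every unitary representation of $\Gamma$ becomes one of $G$ by composing with $\phi^*$. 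This is the content of Woronowicz's Tannaka–Krein duality; the equivalence relation in (1) is precisely what makes the assignment $\Gamma\mapsto\Rep\Gamma\subset\Rep G$ well defined and injective.

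For $(1)\Rightarrow(2)$: given $\phi\colon\Gamma\to\wh{G}$, set $\cC:=\Rep\Gamma$, viewed inside $\Rep G$ via $\phi^*$. Then $\cC$ is a full C*-tensor subcategory (it is closed under $\otimes$ because $\phi^*$ is a coalgebra map, under direct sums and subobjects because $\Rep\Gamma$ is, hence idempotent complete, and under duals because $\Rep\Gamma$ is rigid). Replacing $\phi$ within its equivalence class does not change $\cC$ as a subcategory of the skeletal $\Rep G$, and two homomorphisms with the same image category are equivalent in the sense of (1) — again by Tannaka–Krein, $\cC$ recovers $C^u(\Gamma)$ as the universal C*-completion of the Hopf $*$-algebra of matrix coefficients of objects of $\cC$, together with the map from $\cO(G)$, up to the stated equivalence. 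For $(2)\Rightarrow(3)$: given such a $\cC$, let $S\subset\Irr G$ be the set of (classes of) irreducibles lying in $\cC$; idempotent completeness plus fullness means $\cC$ is exactly the subcategory of objects whose irreducible constituents lie in $S$, closure under duals says $S$ is stable under conjugation, the tensor unit being in $\cC$ says $0\in S$ (the trivial representation), and closure under $\otimes$ followed by decomposition into irreducibles says that the fusion coefficients $N_{\pi\varpi}^{\rho}$ vanish unless $\rho\in S$ whenever $\pi,\varpi\in S$ — which is precisely the statement that $\bZ^{\oplus S}$ is a unital subring of $R(G)$. For $(3)\Rightarrow(1)$: given $S$, let $\cC_S$ be the full subcategory of $\Rep G$ on objects with constituents in $S$; the ring condition makes it a tensor subcategory and the conjugation condition makes it rigid, and it is automatically idempotent complete inside $\Rep G$; now apply Woronowicz–Tannaka–Krein to $\cC_S$ to produce a compact quantum group $\Gamma_S$ with $\Rep\Gamma_S\simeq\cC_S$ together with a surjection $C^u(G)\to C^u(\Gamma_S)$, i.e.\ a closed quantum subgroup of $\wh{G}$. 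One then checks these four assignments are mutually inverse, which is bookkeeping once the dictionary is in place.

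The main obstacle, and the only place needing real care, is $(1)\Leftrightarrow(2)$: one must verify that a \emph{full} tensor subcategory of $\Rep G$ closed under subobjects, direct sums and duals is exactly the image of $\Rep$ of a discrete quantum subgroup, and that the equivalence relation in (1) matches equality of subcategories. The forward direction uses that $\phi^*$ surjective forces $\Rep\Gamma\hookrightarrow\Rep G$ to be \emph{full} — a morphism of $G$-representations between representations pulled back from $\Gamma$ is automatically a $\Gamma$-morphism, because the image of $C^u_0(\wh{G})$ generates everything; the reverse direction and the injectivity up to the equivalence relation are the Tannaka–Krein reconstruction for compact quantum groups as in \cite{Neshveyev-Tuset:book}, applied to the embedding functor $\cC\hookrightarrow\Rep G\to\mathrm{Hilb}$. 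Everything else — closure properties, the unital-subring reformulation, and the verification that the three constructions compose to identities — is routine.
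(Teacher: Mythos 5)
Your proposal follows essentially the same route as the paper: both rest on the dictionary between closed quantum subgroups of $\wh{G}$ and idempotent complete rigid full C*-tensor subcategories of $\Rep G$, with the passage to (3) being the observation that such a subcategory is determined by its set of irreducibles, closure under duals giving closure under conjugation, and closure under tensor products and subobjects giving the unital subring condition. The one substantive difference is in the direction $(3)\Rightarrow(1)$: where you invoke Woronowicz--Tannaka--Krein reconstruction applied to the embedding functor of $\cC_S$ into Hilbert spaces, the paper proceeds more concretely, taking the von Neumann subalgebra $M\subset L^\infty(G)$ generated by the matrix coefficients $(\omega\otimes\id)(U_\pi)$ for $\pi\in S$, observing that $(M,\Delta_G|_M)$ is a compact quantum group for the restricted Haar state, and reading off the closed quantum subgroup of $\wh{G}$ in the sense of Vaes directly from the inclusion $M=L^\infty(H)\subset L^\infty(G)$; this avoids reconstruction machinery and settles the Vaes-type condition at once. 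One caution: you repeatedly dualize in the wrong direction. A closed quantum subgroup $\phi\colon\Gamma\to\wh{G}$ has $\Gamma$ \emph{discrete} (not compact), the relevant full subcategory of $\Rep G$ is $\Rep\wh{\Gamma}$ rather than $\Rep\Gamma$, and the algebra-level incarnation of that full inclusion is an \emph{injection} $\cO(\wh{\Gamma})\hookrightarrow\cO(G)$ of Hopf $*$-subalgebras (equivalently, the injective normal map $L^\infty(\wh{\Gamma})\to L^\infty(G)$ induced by $\wh{\phi}^*$), not a surjection $C^u(G)\to C^u(\Gamma_S)$ --- a surjection out of $C^u(G)$ would exhibit a closed quantum subgroup of $G$, not of $\wh{G}$; the surjection in this picture lives on the discrete side, $c_0(\wh{G})\to c_0(\Gamma)$. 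With those corrections your argument goes through as outlined.
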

	
	\begin{proof}
		For a closed quantum subgroup $\phi\colon \Gamma\to \wh{G}$ with the injective $*$-homomorphism $L^\infty(\wh{\Gamma})\to L^\infty(G)$ induced by $\wh{\phi}^*$, 
		the well-defined tensor functor $\Rep\wh{\Gamma}\to \Rep G$ sending each $\pi\in\Rep \wh{\Gamma}$ to $(\cH_\pi, (\id\otimes\wh{\phi}^*)(U_\pi))$ preserves irreducibility of objects, and the image of $\Irr \wh{\Gamma}\to \Irr G$ satisfies the conditions of (3). 
		Conversely, let $S\subset\Irr G$ be a subset as in (3) and take the von Neumann subalgebra $M\subset L^\infty(G)$ generated by $\{ (\omega\otimes\id)(U_\pi) \,|\, \pi\in S, \omega\in\cB(\cH_\pi)^* \}$. Then $H:=(M,\Delta_G|_M)$ is a locally compact quantum group with the restriction of the Haar state of $G$ and the inclusion $M=L^\infty(H)\subset L^\infty(G)$ gives a closed quantum subgroup $\wh{H}$ of $\wh{G}$. 
		Now we can check these constructions between (1) and (3) are mutually inverse. 
		It is easy to see the bijective correspondence from (2) to (3) can be given by sending $\cC$ to $\Irr\cC\subset \Irr G$. 
	\end{proof}
	
	The following proposition allows us to untwist the quantum group structure of certain semidirect products in the classification. 
	Let $G$ be a compact quantum group, $H$ be a locally compact group, and 
	$\gamma\colon H\to \Aut(c_0(\wh{G}))$ be a continuous homomorphism preserving the comultiplication of $\wh{G}$. 
	We take the $*$-automorphism $\mrm\in\Aut(L^\infty(H)\barotimes \ell^\infty(\wh{G}))$ satisfying 
	\begin{align*}
		&
		\mrm(f\otimes x)\colon t\mapsto f(t)\otimes \gamma_t(x)
	\end{align*}
	for $f\in L^\infty(H)$, $x\in \ell^\infty(\wh{G})$, $t\in H$ and 
	\begin{align*}
		&
		\rD(\gamma):=(L^\infty(H)\barotimes \ell^\infty(G), (\sigma\mrm)_{23}(\Delta_{H}\otimes\wh{\Delta}_{G}))
	\end{align*}
	is a well-defined locally compact quantum group with $C_0(\rD(\gamma))=C_0(H)\otimes c_0(\wh{G})$ by \cite[Proposition~9.2]{Baaj-Vaes:doublecrossed}. 
	It is coamenable with the counit $\epsilon_H\otimes \epsilon_{\wh{G}}$. 
	If $H$ is discrete, then so is $\rD(\gamma)$ by the expression from \cite[Proposition~9.1]{Baaj-Vaes:doublecrossed}. 
	
	\begin{prop}\label{prop:untwistconn}
		Let $G$ be a compact quantum group, $H$ be a connected compact group, 
		and $\gamma\colon H\to \Aut(c_0(\wh{G}))$ be a continuous homomorphism preserving the comultiplication. 
		We put $\fH$ as the group $H$ with the discrete topology and consider $\delta\colon \fH\to \Aut(c_0(\wh{G}))$ given by $\gamma$. 
		Then any discrete quantum subgroup $\Gamma\leq\rD(\gamma)$ is given by 
		\begin{align}\label{eq:prop:untwistconn}
			&
			c_0(\Gamma)=\bigoplus\limits_{\pi\in \Irr G} C(S_\pi)\otimes \cB(\cH_\pi), 
		\end{align}
		as a quotient of $C_0(\rD(\gamma))$ for some $S\in \cS_f$, 
		where $\cS_f$ is a family of $S\subset\fH\times\Irr G$ 
		that is closed under conjugations and gives a subring $\bZ^{\oplus S}\subset \bZ^{\oplus\fH}\otimes R(G)$ 
		such that $S_\pi:=\{ t\in\fH \,|\, (t,\pi)\in S \}$ is finite for all $\pi\in\Irr G$. 
		Conversely, any $S\in \cS_f$ gives a well-defined discrete quantum subgroup of $\rD(\gamma)$ in the way above. 
	\end{prop}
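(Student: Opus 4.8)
The plan is to pin down the C*-algebra $c_0(\Gamma)$ first, and then to show that classifying such $\Gamma$ inside $\rD(\gamma)$ amounts to classifying them inside the discrete double $\rD(\delta)$, to which \cref{lem:clsubDQGfusion} applies.

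Since $\rD(\gamma)$ is coamenable, a discrete quantum subgroup $\psi\colon\Gamma\to\rD(\gamma)$ is the same thing as a surjective $*$-homomorphism $\psi^*\colon C(H)\otimes c_0(\wh G)\twoheadrightarrow c_0(\Gamma)$ intertwining the comultiplications, where one uses $C^u_0(\rD(\gamma))=C_0(\rD(\gamma))=C(H)\otimes c_0(\wh G)$ and $C^u_0(\Gamma)=c_0(\Gamma)$. Arguing as in the first paragraph of the proof of \cref{prop:DQGclVW}, for every $\pi\in\Irr G$ the projection $p_\pi:=\psi^*(1\otimes 1_{\cH_\pi})$ lies in the center of $\cM(c_0(\Gamma))$ and $\psi^*$ restricts to a surjection $C(H)\otimes\cB(\cH_\pi)\twoheadrightarrow p_\pi c_0(\Gamma)$. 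As $\cH_\pi$ is finite dimensional, every quotient C*-algebra of $C(H)\otimes\cB(\cH_\pi)\cong C(H,\cB(\cH_\pi))$ has the form $C(F_\pi,\cB(\cH_\pi))$ for a closed $F_\pi\subseteq H$; on the other hand $p_\pi c_0(\Gamma)$ is a $c_0$-direct sum of full matrix algebras, which forces $C(F_\pi)$ to be a $c_0$-direct sum of copies of $\bC$, i.e.\ $F_\pi$ to be discrete and hence finite by compactness of $H$. Writing $S_\pi:=F_\pi$, this yields \cref{eq:prop:untwistconn} and shows that $\psi^*=\bigoplus_{\pi}(\ev_{S_\pi}\otimes\id)$, where $\ev_{S_\pi}\colon C(H)\to C(S_\pi)$ is the restriction map.

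Next I would pass to $\rD(\delta)$, a discrete quantum group with $c_0(\rD(\delta))=c_0(\fH)\otimes c_0(\wh G)$. Because $H$ is connected, the continuous homomorphism $\gamma$, hence $\delta$, fixes every element of $\Irr G$; consequently the minimal central projections of $c_0(\rD(\delta))$ are indexed by $\fH\times\Irr G$ and the twisting automorphism appearing in $\Delta_{\rD(\delta)}$ is inner on each matrix block, so that $\Irr\widehat{\rD(\delta)}=\fH\times\Irr G$ and $R(\widehat{\rD(\delta)})=\bZ^{\oplus\fH}\otimes R(G)$. Moreover, since $\Delta_{\rD(\gamma)}$ and $\Delta_{\rD(\delta)}$ are given by identical formulas and each $\ev_{S_\pi}$ records only finitely many points of $H=\fH$, the very same quotient map (now read off $c_0(\fH)\otimes c_0(\wh G)$) exhibits $\Gamma$ as a discrete quantum subgroup of $\rD(\delta)$; conversely, since $C(H)\to C(S_\pi)$ is surjective by Tietze's extension theorem, any discrete quantum subgroup of $\rD(\delta)$ all of whose blocks are finite is one of $\rD(\gamma)$, and these two passages are mutually inverse.

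It then remains to invoke \cref{lem:clsubDQGfusion} for the compact quantum group $\widehat{\rD(\delta)}$: discrete quantum subgroups of $\rD(\delta)$ correspond to subsets $S\subseteq\fH\times\Irr G$ closed under conjugation with $\bZ^{\oplus S}\subseteq\bZ^{\oplus\fH}\otimes R(G)$ a unital subring, and imposing that every $S_\pi$ be finite singles out exactly $\cS_f$. Combined with the previous step, and checking that $\Gamma\mapsto S$ and $S\mapsto\Gamma$ are mutually inverse (the block data together with \cref{eq:prop:untwistconn} reconstruct $\Gamma$), this gives both the direct and the converse assertion. I expect the main obstacle to be the fusion-rule identification $R(\widehat{\rD(\delta)})=\bZ^{\oplus\fH}\otimes R(G)$: one must verify that the twist carried by $\Delta_{\rD(\delta)}$, though it need not be trivial, is inner on each matrix block — which is precisely where connectedness of $H$ enters, via triviality of the $\Irr G$-action — and therefore changes neither which blocks occur in $\wh\Delta$ on minimal central projections nor their multiplicities, so that the fusion ring is the honest tensor product; this is the exact content of the ``untwisting''. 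The block analysis, the comparison of $\rD(\gamma)$ with $\rD(\delta)$ on finite subsets, and the bookkeeping of the bijection are then routine.
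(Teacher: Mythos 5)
Your proposal is correct and follows essentially the same route as the paper: reduce to the discrete double $\rD(\delta)$ via finiteness of the blocks, untwist the comultiplication blockwise using that connectedness of $H$ forces a trivial action on $\Irr G$ (so the twist is inner on each block and the fusion ring is that of $\wh{\fH}\times G$), and then invoke \cref{lem:clsubDQGfusion}. The only differences are cosmetic — you establish the form of $c_0(\Gamma)$ first via the ideal structure of $C(H)\otimes\cB(\cH_\pi)$ whereas the paper untwists first and phrases the passage between $\rD(\gamma)$ and $\rD(\delta)$ via normal extensions to $\ell^\infty(\fH)\otimes\cB(\cH_\pi)$, which is the rigorous version of your ``identical formulas'' step.
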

	
	In the proof, we write 
	$\pr_\pi\colon \ell^\infty(\wh{G})\to \cB(\cH_\pi)$ for the projection onto the component corresponding to each $\pi\in\Irr G$ and $\ev_t\colon C(H)\to \bC$ for the evaluation at each $t\in H$. 
	Then a finite dimensional $*$-representation of $c_0(\wh{G})$ is a direct sum of $\pr_\pi$ for finitely many $\pi\in\Irr G$ possibly with multiplicities, which form a C*-tensor category such that $\pr_\pi\otimes \pr_\varpi:=(\pr_\varpi\otimes \pr_\pi)\wh{\Delta}_{G}$ for $\pi,\varpi\in\Irr G$. 
	The resulting C*-tensor category is monoidally equivalent to $\Rep(G)$. 
	
	\begin{proof}
		An irreducible $*$-representation of $c_0(\fH)\otimes c_0(\wh{G})$ is of the form $\ev_t\otimes \pr_\pi$ for $t\in \Irr\wh{\fH}=\fH$ and $\pi\in \Irr G$, which gives a equivalence of C*-categories $\Rep \wh{\rD}(\delta)$ and $\Rep(\wh{\fH}\times G)$. 
		
		Since $\gamma\colon H\to \Aut(c_0(\wh{G}))$ induces a continuous action of $H$ on the set of minimal projections in $\cZ(c_0(\wh{G}))$, it must be trivial by the connectivity of $H$. 
		Thus for each $t\in H$, the $*$-automorphism $\gamma_t$ on $c_0(\wh{G})$ preserves each direct summand $\cB(\cH_\pi)$ for $\pi\in\Irr G$, and is of the form $\Ad \bigoplus\limits_{\pi\in\Irr G}w_{t,\pi}$ for some $w_{t,\pi}\in\cU(\cH_\pi)$. 
		For $s,t\in\Irr\wh{\fH}$ and $\pi,\varpi\in \Irr G$, we see the unitary 
		$w_{t,\pi}^*\otimes 1_{\cH_{\varpi}}$ 
		gives the unitary equivalence of finite dimensional $*$-representations of $c_0(\fH)\otimes c_0(\wh{G})$ on $\cH_\pi\otimes\cH_\varpi$, 
		\begin{align*}
			&
			\bigl((\ev_s\otimes \pr_\pi)\otimes (\ev_{t}\otimes \pr_{\varpi})\bigr)\Delta_{\rD(\delta)} 
			\\\cong &
			\bigl((\ev_{s}\otimes \ev_{t})\otimes (\pr_\pi\otimes \pr_{\varpi})\bigr) (\Delta_{\fH}\otimes \wh{\Delta}_G) .
		\end{align*}
		Therefore we see the fusion rules of $\wh{\rD}(\gamma)$ and $\wh{\fH}\times G$ are the same. 
		
		By (3) of \cref{lem:clsubDQGfusion}, we see the quotient C*-algebra of $c_0(\fH)\otimes c_0(\wh{G})$ gives a discrete quantum subgroup of $\rD(\delta)$ if and only if it gives a discrete quantum subgroup of $\fH\times \wh{G}$. 
		Any $S\in\cS_f$ gives a discrete quantum subgroup $\Gamma$ of $\rD(\delta)$ with \cref{eq:prop:untwistconn}. 
		Since the image of $\ell^\infty(\fH)\otimes \cB(\cH_\pi)\to \ell^\infty(\Gamma)$ is finite dimensional for every $\pi\in\Irr G$, 
		the image will not change if we compose the map after $C(H)\otimes \cB(\cH_\pi)\to \ell^\infty(\fH)\otimes \cB(\cH_\pi)$. 
		The resulting surjective $*$-homomorphism $C(H)\otimes c_0(\wh{G})\to c_0(\Gamma)$ preserves the comultiplications by construction, and thus $\Gamma$ is a discrete quantum subgroup of $\rD(\gamma)$. 
		
		Conversely, take any discrete quantum subgroup $\phi\colon \Gamma\to \rD(\gamma)$. 
		For each $\pi\in\Irr G$, the $*$-homomorphism $C(H)\otimes \cB(\cH_\pi)\to c_0(\Gamma)$ given by the restriction of $\phi^*$ 
		extends normally to $\ell^\infty(\fH)\otimes \cB(\cH_\pi)\to \ell^\infty(\Gamma)$ with a finite dimensional image, because $C(H)\otimes \cB(\cH_\pi)$ has a finite dimensional image in $c_0(\Gamma)$ and there is a finite subset $F\subset H$ such that $C_0(H\setminus F)\otimes\cB(\cH_\pi)\subset \Ker\phi^*$. 
		Thus $\phi^*$ extends to a surjective normal map $\psi^*\colon \ell^\infty(\rD(\delta)) \to \ell^\infty(\Gamma)$ compatible with comultiplications. 
		By construction, 
		\begin{align*}
			&
			\psi^*(c_0(\fH)\otimes \cB(\cH_\pi)) = \phi^*(C(H)\otimes \cB(\cH_\pi)) \subset c_0(\Gamma) 
		\end{align*}
		for each $\pi\in\Irr G$, and thus $\psi^*(c_0(\rD(\delta)))= c_0(\Gamma)$, which identifies $\Gamma$ as a discrete quantum subgroup of $\rD(\delta)$. 
		If we take $S:=\{ (t,\pi)\in\fH\times\Irr G \,|\, \psi^*(\delta_t\otimes1_{\cH_\pi})\neq 0 \}$, it holds $S\in\cS_f$, and $c_0(\Gamma)$ is of the form \cref{eq:Gammaform}. 
	\end{proof}
	
	If we let $\gamma_t:=\Ad \bigl((\ev_t\otimes \id)(W^{\iota_q})\bigr)$ for $t\in T$, then $\rD(\iota_q)\cong \rD(\gamma)$. 
	Since the fusion rules of $K_q$ and $K$ are the same, we get the following consequence. 
	
	\begin{cor}\label{cor:subDQGcptquot}
		Discrete quantum subgroups of $\rD(K_q)$ correspond to discrete quantum subgroups of $T\times \wh{K}$ bijectively. 
	\end{cor}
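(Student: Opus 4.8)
The plan is to combine three facts already at hand: the reduction of \cref{rem:subDQGtorus}, the untwisting of \cref{prop:untwistconn}, and the coincidence of the fusion rules of $K_q$ and $K$. First, by \cref{rem:subDQGtorus} every discrete quantum subgroup of $\rD(K_q)$ is a discrete quantum subgroup of $\rD(\iota_q)$; conversely, since $\iota_q\colon T\to K_q$ gives a closed quantum subgroup, the induced homomorphism $\rD(\iota_q)\to\rD(K_q)$ gives a closed quantum subgroup, so any discrete quantum subgroup of $\rD(\iota_q)$ is one of $\rD(K_q)$, and one checks these two assignments are mutually inverse at the level of quotients. Next I would recognise $\rD(\iota_q)$ as a generalized quantum double of the type $\rD(\gamma)$: for $t\in T$ the element $(\ev_t\otimes\id)(W^{\iota_q})$ is a unitary in $\ell^\infty(\wh{K_q})$, and setting $\gamma_t:=\Ad\bigl((\ev_t\otimes\id)(W^{\iota_q})\bigr)$ yields a continuous homomorphism $\gamma\colon T\to\Aut(c_0(\wh{K_q}))$ preserving $\wh{\Delta}_{K_q}$ — the multiplicativity of $t\mapsto\gamma_t$ is the bicharacter identity $W^{\iota_q}_{23}W^T_{12}=W^T_{12}W^{\iota_q}_{13}W^{\iota_q}_{23}$, and preservation of the comultiplication of $\wh{K_q}$ is the other bicharacter identity $W^{K_q}_{23}W^{\iota_q}_{12}=W^{\iota_q}_{12}W^{\iota_q}_{13}W^{K_q}_{23}$. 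Comparing comultiplications, or citing \cite[Proposition~9.2]{Baaj-Vaes:doublecrossed}, gives $\rD(\iota_q)\cong\rD(\gamma)$.

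Now I would apply \cref{prop:untwistconn} with $G=K_q$, $H=T$ (a connected compact group), and $\gamma$ as above: this identifies the discrete quantum subgroups of $\rD(\gamma)\cong\rD(\iota_q)$ with the family $\cS_f$ of subsets $S\subset\fT\times\Irr K_q$ that are closed under conjugation, span a unital subring of $\bZ^{\oplus\fT}\otimes R(K_q)$, and have finite fibre $S_\pi$ for each $\pi\in\Irr K_q$, where $\fT$ is $T$ with the discrete topology. Running exactly the same argument with $K$ in place of $K_q$ and the trivial action in place of $\gamma$ — here $\rD(\mathrm{trivial})=T\times\wh{K}$ because then the automorphism $\mrm$ appearing in the definition of $\rD(\gamma)$ is the identity — \cref{prop:untwistconn} identifies the discrete quantum subgroups of $T\times\wh{K}$ with the analogous family $\cS_f$ attached to $\fT\times\Irr K$ and $\bZ^{\oplus\fT}\otimes R(K)$. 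Since the fusion rules of $K_q$ and $K$ are the same, i.e. $\Irr K_q=\Irr K$ compatibly with conjugation and $R(K_q)=R(K)$ as rings, these two copies of $\cS_f$ are literally identical; composing the two bijections produces the asserted correspondence, which is moreover compatible with the equivalence relation of \cref{lem:clsubDQGfusion}(1).

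The only step that asks for genuine care is the identification $\rD(\iota_q)\cong\rD(\gamma)$ together with the verification that the hypotheses of \cref{prop:untwistconn} hold — chiefly that $\gamma$ preserves the comultiplication of $\wh{K_q}$, which is precisely the bicharacter axiom for $W^{\iota_q}$ recorded above, and that $T$ is connected, which is automatic for a torus. Everything else is formal: the substantive input — Soibelman's classification, which forces a discrete quantum subgroup into the quantum Borel, and the structure theory of generalized quantum doubles underlying \cref{prop:untwistconn} — has already been absorbed into \cref{rem:subDQGtorus} and \cref{prop:untwistconn}.
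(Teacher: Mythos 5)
Your proposal is correct and takes essentially the same route as the paper: reduce to $\rD(\iota_q)$ via \cref{rem:subDQGtorus}, identify $\rD(\iota_q)\cong\rD(\gamma)$ with $\gamma_t=\Ad\bigl((\ev_t\otimes\id)(W^{\iota_q})\bigr)$, apply \cref{prop:untwistconn} to both sides, and match the two copies of $\cS_f$ using the coincidence of the fusion rules of $K_q$ and $K$. You merely spell out details (the converse inclusion of discrete quantum subgroups and the bicharacter verifications) that the paper leaves implicit in the two sentences preceding the corollary.
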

	
	To give an explicit description of discrete quantum subgroups, we decompose a $\Rep K$ into simple module subcategories. For a C*-tensor category $\cC$ and its idempotent complete C*-tensor full subcategory $\cD\subset \cC$, we say an idempotent complete full C*-subcategory $\cM\subset\cC$ is a $\cD$-module subcategory if the restriction of the monoidal bifunctor on $\cC$ gives a well-defined bifunctor $\cD\times \cM\to\cM$. 
	
	Let $G$ be a compact group. 
	For a character $\chi\in \Irr\cZ(G)=\wh{\cZ}(G)$, we take $\Rep_\chi G$ as the full C*-subcategory of $\Rep G$ generated by $\Irr_\chi G:=\{ \pi\in \Irr G \,|\, \chi\leq \pi|_{\cZ(G)} \}$. 
	Note that $\pi\in\Irr_\chi G$ satisfies $\pi|_{\cZ(G)}=\chi^{\oplus \dim\cH_\pi}$ by Schur's lemma. 
	Also, $\pi\otimes \pi'\subset\Rep_{\chi\chi'} G$ and $\overline{\pi}\subset \Rep_{\overline{\chi}} G$ for $\chi,\chi'\in\wh{\cZ}(G)$, $\pi\in \Rep_\chi G$, and $\pi'\in \Rep_{\chi'} G$. 
	Especially, $\Rep_{\chi}G$ is a $\Rep_{\1}G$-module subcategory of $\Rep G$. 
	We often identify $\Rep_{\1}G$ as the image of $\Rep(G/\cZ(G))$ via the restriction by $G\to G/\cZ(G)$. 
	Since $\varpi\leq \varpi\otimes\overline{\pi}\otimes\pi$ and $\varpi\otimes\overline{\pi}\in \Rep_{\1}G$ for $\pi,\varpi\in \Irr_\chi G$ and $\chi\in\wh{\cZ}(G)$, 
	we have the following. 
	
	\begin{lem}\label{lem:repsimplemod}
		For a compact group $G$, $\chi\in\wh{\cZ}(G)$, and $\pi\in\Irr_\chi G$, 
		the full subcategory $\Rep_\chi G\subset\Rep G$ is the smallest as an idempotent complete $\Rep (G/\cZ(G))$-module full subcategory containing $\pi$. 
	\end{lem}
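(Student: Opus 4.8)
The plan is to check the two halves of the assertion separately: first that $\Rep_\chi G$ genuinely is an idempotent complete $\Rep(G/\cZ(G))$-module full subcategory containing $\pi$, and then that every such subcategory contains it.

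For the first half, I would note that $\pi\in\Rep_\chi G$ is immediate from the hypothesis $\pi\in\Irr_\chi G$. An object $\rho\in\Rep G$ lies in $\Rep_\chi G$ precisely when $\rho|_{\cZ(G)}$ is $\chi$-isotypic, i.e. $\rho(z)=\chi(z)\,\id$ for all $z\in\cZ(G)$; this condition passes to subobjects and to finite direct sums, so the full subcategory it cuts out is closed under subobjects and direct sums, hence idempotent complete. That $\Rep_\chi G$ is a $\Rep_{\1}G$-module subcategory — hence, under the identification of $\Rep_{\1}G$ with the image of $\Rep(G/\cZ(G))$, a $\Rep(G/\cZ(G))$-module subcategory — was already recorded just above the lemma: if $\rho'$ comes from $G/\cZ(G)$ and $\rho\in\Rep_\chi G$, then $\cZ(G)$ acts on $\cH_{\rho'}\otimes\cH_\rho$ through $\1\cdot\chi=\chi$, so $\rho'\otimes\rho\in\Rep_\chi G$.

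For minimality, let $\cM\subset\Rep G$ be any idempotent complete $\Rep(G/\cZ(G))$-module full subcategory with $\pi\in\cM$. Since $\cM$ is closed under subobjects and finite direct sums, it suffices to prove $\varpi\in\cM$ for each $\varpi\in\Irr_\chi G$, and here I would use the observation highlighted just before the lemma: $\varpi\otimes\overline{\pi}\in\Rep_{\chi\overline{\chi}}G=\Rep_{\1}G$, which lies in (the image of) $\Rep(G/\cZ(G))$, so the module structure gives $\varpi\otimes\overline{\pi}\otimes\pi=(\varpi\otimes\overline{\pi})\otimes\pi\in\cM$. On the other hand rigidity of $\Rep G$ gives $\1\leq\overline{\pi}\otimes\pi$ (the multiplicity of $\1$ being $\dim\mathrm{Hom}(\pi,\pi)=1$), whence $\varpi\cong\varpi\otimes\1\leq\varpi\otimes\overline{\pi}\otimes\pi$; since $\cM$ is closed under subobjects, $\varpi\in\cM$. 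This yields $\Rep_\chi G\subset\cM$, completing the argument.

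I do not expect a genuine obstacle: the proof reduces entirely to the two bookkeeping facts about central characters (multiplicativity under $\otimes$ and behaviour under conjugation) together with rigidity, all of which are standard or already in place in the surrounding text. The only point needing mild care is keeping the identification of $\Rep_{\1}G$ with a full subcategory of $\Rep(G/\cZ(G))$ consistent throughout, so that the module hypothesis on $\cM$ can be applied to objects of the form $\varpi\otimes\overline{\pi}$; since the paper has already fixed that identification, this is routine.
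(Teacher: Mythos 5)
Your proposal is correct and follows exactly the route the paper takes: the paper's entire justification is the sentence preceding the lemma, namely that $\varpi\leq\varpi\otimes\overline{\pi}\otimes\pi$ with $\varpi\otimes\overline{\pi}\in\Rep_{\1}G$ for $\pi,\varpi\in\Irr_\chi G$, which is precisely your minimality argument. Your additional verification that $\Rep_\chi G$ is itself an idempotent complete $\Rep(G/\cZ(G))$-module subcategory only spells out what the paper records in the paragraph above the lemma, so there is no substantive difference.
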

	
	For $\chi\in\wh{\cZ}(K)$, we put $\Irr_\chi K_q$ for the image of $\Irr_\chi K$ via $\Irr K \cong \Irr K_q$. 
	
	\begin{thm}\label{thm:classifyss}
		For a connected simply connected compact semisimple Lie group $K$ and $0<q<1$, 
		we decompose $K=\prod\limits_{k=1}^{n} K_k$ into connected simply connected compact simple Lie groups $K_1,\cdots,K_n$ and put $K_J:=\prod\limits_{k\in J}K_k$ for $J\subset\{1,\cdots,n\}$. 
		Then any discrete quantum subgroup $\Gamma\leq\rD(K_q)$ is of the form 
		\begin{align*}
			&
			c_0(\Gamma)=\bigoplus\limits_{\chi\in F}C(S+f(\chi))\otimes \bigoplus\limits_{\pi\in \Irr_\chi K_{J,q}} \cB(\cH_\pi)
		\end{align*}
		for a subset $J\subset\{1,\cdots,n\}$, a finite subgroup $S$ of the maximal torus $T$ of $K$, and a group homomorphism $f\colon F\to T/S$ from a subgroup $F$ of $\wh{\cZ}(K_J)$. 
	\end{thm}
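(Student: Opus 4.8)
The plan is to reduce the statement to the combinatorial classification prepared in \cref{prop:untwistconn} and then to unwind that combinatorics using the product structure of $K$. By \cref{rem:subDQGtorus} every discrete quantum subgroup of $\rD(K_q)$ is one of $\rD(\iota_q)$, and $\rD(\iota_q)\cong\rD(\gamma)$ for the action $\gamma$ introduced just before \cref{cor:subDQGcptquot}, so \cref{prop:untwistconn} identifies such subgroups with the family $\cS_f$. Using $\Irr K_q\cong\Irr K$ and $R(K_q)\cong R(K)$, and writing $\chi_\pi$ for the central character of an irreducible $\pi$, an element of $\cS_f$ is a conjugation-closed $S\subseteq\fT\times\Irr K$ (with $\fT$ the group $T$ made discrete) whose $\bZ$-span is a unital subring of $\bZ^{\oplus\fT}\otimes R(K)\cong R(\wh{\fT}\times K)$ with all $S_\pi:=\{t\in\fT\mid(t,\pi)\in S\}$ finite, and then $c_0(\Gamma)=\bigoplus_\pi C(S_\pi)\otimes\cB(\cH_\pi)$. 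The goal is to show any such $S$ equals $\{(t,\pi)\mid\pi\in\Irr K_J,\ \chi_\pi\in F,\ t\in N+f(\chi_\pi)\}$ for data $(J,N,F,f)$ as in the statement (with $N$ the finite subgroup called $S$ there), and conversely that every such choice gives an element of $\cS_f$.

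First I would extract the ``horizontal'' part $P:=\{\pi\in\Irr K\mid S_\pi\neq\emptyset\}$, which is closed under conjugates and spans a unital subring of $R(K)$; by \cref{lem:clsubDQGfusion} and the Tannaka--Krein duality for the compact group $K$, $P=\Irr(K/M)$ for a unique closed normal $M\trianglelefteq K$, and since $K=\prod_{k=1}^{n}K_k$ with each $K_k$ simple, $M=K_{J^c}\times N''$ for some $J\subseteq\{1,\dots,n\}$ and some $N''\le\cZ(K_J)$; hence $P=\bigcup_{\chi\in F}\Irr_\chi K_J$ with $F:=\{\chi\in\wh{\cZ}(K_J)\mid\chi|_{N''}=1\}\le\wh{\cZ}(K_J)$, which produces $J$ and $F$. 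For the ``vertical'' part, $N:=\{t\in\fT\mid(t,\1)\in S\}$ is a subgroup of $T$ by the subring and conjugation properties, finite by hypothesis; for $\pi\in P$ and $s,s'\in S_\pi$, computing the product of $(s,\pi)$ and $((s')^{-1},\overline\pi)$ in $\bZ^{\oplus S}$ and using that $\1$ occurs in $\pi\otimes\overline\pi$ gives $s(s')^{-1}\in N$, so each $S_\pi$ is a single coset of $N$. The map $g\colon P\to T/N$, $g(\pi):=S_\pi$, then satisfies $g(\1)=N$, $g(\overline\pi)=g(\pi)^{-1}$, and $g(\rho)=g(\pi)g(\varpi)$ whenever $\rho$ occurs in $\pi\otimes\varpi$, straight from the ring structure of $\bZ^{\oplus S}$.

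The crux is that $g$ factors through the central character. I would first show $g$ is trivial on $\Irr_{\1} K_J=\Irr(K_J/\cZ(K_J))=\prod_{k\in J}\Irr(K_k/\cZ(K_k))$: on the $k$-th factor, the complexified adjoint representation $a_k$ is faithful and self-dual, so its tensor powers exhaust $\Irr(K_k/\cZ(K_k))$, while both $\1$ and $a_k$ occur in $a_k\otimes a_k$ (via the Killing form and the Lie bracket), forcing $g(a_k)^2=g(\1)=g(a_k)$, hence $g(a_k)=N$ and $g\equiv N$ there; since an irreducible representation of a product is an irreducible external tensor product, $g\equiv N$ on all of $\Irr_{\1} K_J$. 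Then, for $\chi\in F$ and $\pi\in\Irr_\chi K_J$, the set $\{\pi'\in\Irr_\chi K_J\mid g(\pi')=g(\pi)\}$ is an idempotent complete $\Rep(K_J/\cZ(K_J))$-module full subcategory of $\Rep K_J$ containing $\pi$ — closure under tensoring by $\Irr_{\1} K_J$ uses the previous step together with $\Irr_{\1} K_J\subseteq P$ (as $\1\in F$) — so by \cref{lem:repsimplemod} it equals $\Irr_\chi K_J$. Thus $g$ is constant on each $\Irr_\chi K_J$, and $f(\chi):=g(\pi)$ defines a homomorphism $F\to T/N$ because $g$ is monoidal; therefore $S_\pi=N+f(\chi_\pi)$ for all $\pi\in P$, and substituting into $c_0(\Gamma)=\bigoplus_\pi C(S_\pi)\otimes\cB(\cH_\pi)$, grouping by central character, and transporting along $\Irr K\cong\Irr K_q$ yields the asserted form.

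Conversely, given $(J,N,F,f)$ as in the statement, the set $S:=\{(t,\pi)\mid\pi\in\Irr K_J,\ \chi_\pi\in F,\ t\in N+f(\chi_\pi)\}$ is conjugation-invariant (using $\chi_{\overline\pi}=\chi_\pi^{-1}$ and that $f$ is a homomorphism), spans a unital subring (tensoring representations inflated from $K_J$ stays inflated from $K_J$, multiplies central characters, and $(N+f(\chi))+(N+f(\chi'))=N+f(\chi\chi')$, while $(e,\1)\in S$ is clear), and has each $S_\pi=N+f(\chi_\pi)$ finite; so \cref{prop:untwistconn} produces the corresponding discrete quantum subgroup of $\rD(\gamma)\cong\rD(\iota_q)$, returned to $\rD(K_q)$ by \cref{rem:subDQGtorus} (equivalently \cref{cor:subDQGcptquot}). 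I expect the main obstacle to be the crux above: showing the ``twist'' $g$ carries no information beyond the central character. That is exactly where the triviality of a monoidal map out of the representation ring of an adjoint semisimple group, and the minimality of $\Rep_\chi K_J$ among module subcategories supplied by \cref{lem:repsimplemod}, both enter; the remaining steps are routine bookkeeping with based subrings and the classification of closed normal subgroups of $K$.
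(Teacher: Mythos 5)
Your argument is correct and its skeleton is the same as the paper's: reduce to the combinatorial family $\cS_f$ via \cref{rem:subDQGtorus}, \cref{prop:untwistconn} and \cref{cor:subDQGcptquot}, use \cref{lem:repsimplemod} to show each fibre over a fixed $t\in T$ and $\chi\in\wh{\cZ}(K_J)$ is either empty or all of $\Irr_\chi K_J$, and then repackage the resulting family of cosets as the triple $(S,F,f)$. The one place you genuinely diverge is the step establishing that $(e,\rho)$ lies in the support for every $\rho\in\Irr(K_J/\cZ(K_J))$: the paper gets this in one stroke by extracting a nontrivial central-character-$\1$ summand of $\overline{t\otimes\pi}\otimes(t\otimes\pi)$ and invoking the simplicity of $K_k/\cZ(K_k)$, whereas you first identify the ``horizontal'' support with $\Irr(K/M)$ for a closed normal $M=K_{J^c}\times N''$, show each $S_\pi$ is a coset of $N$, and then kill the residual twist $g$ on $\Irr_{\1}K_J$ using the adjoint representation ($a_k\leq a_k\otimes a_k$ forces $g(a_k)=g(a_k)^2=N$, and tensor powers of the faithful self-dual $a_k$ exhaust $\Irr(K_k/\cZ(K_k))$). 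Both routes are sound; the paper's is shorter because simplicity is used as a black box, while yours makes the generation argument explicit and cleanly separates the determination of $J$ and $F$ (via the classification of closed normal subgroups of $K$) from the determination of the coset data $(N,f)$.
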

	
	Before the proof, note that $K_J/\cZ(K_J)=\prod\limits_{k\in J}K_k/\cZ(K_k)$ for $J\subset\{1,\cdots,n\}$. 
	We write $\fT$ for $T$ with the discrete topology. 
	Also, we often use the restrictions of representations via the inclusion $K_k\to K$ and the projection $K\to K_k$ for each $k$. 
	
	\begin{proof}
		First, we fix any discrete quantum subgroup $\Gamma\leq T\times \wh{K}$. We identify $\Irr\wh{\Gamma}$ with an element of $\cS_f$ as in \cref{prop:untwistconn} for $\fT\times\Irr K$ with trivial $\gamma$. 
		
		Take any $t\otimes \pi\in \Irr\wh{\Gamma}$, and decompose $\pi=\pi_1|_{K}\otimes\cdots\otimes\pi_n|_{K}$ with $\pi_k\in \Irr K_k$ for each $k$. For any $k\in\{1,\cdots,n\}$ such that $\pi_k$ is not one-dimensional, 
		$\overline{\pi}_k\otimes\pi_k\in \Rep_{\1}K_k$ has a non-one-dimensional irreducible summand, and 
		\begin{align*}
			&
			(\overline{\pi}_k\otimes\pi_k)|_K
			\leq 
			\overline{t\otimes \pi}\otimes t\otimes \pi 
			\in \Rep \wh{\Gamma}. 
		\end{align*}
		It follows $\{e\}\times \Irr (K_k/\cZ(K_k))\subset \Irr\wh{\Gamma}$ by the simplicity of $K_k/\cZ(K_k)$. 
		We put 
		\begin{align*}
			J:=&
			\{ k\in\{1,\cdots,n \} \,|\, \{e\}\times\Rep (K_k/\cZ(K_k))\subset \Irr\wh{\Gamma} \}
			\\
			=&
			\{ k\in\{1,\cdots,n\} \,|\, t\otimes\pi\in \Irr\wh{\Gamma}, \pi|_{K_k}\not\in\Rep(K_k^{\rm ab}) \} 
		\end{align*}
		using the abelianization $K_k^{\rm ab}$, and it holds 
		\begin{align}\label{eq:Gammasandwich}
			&
			\{e\}\times\Irr (K_J/\cZ(K_J))\subset \Irr\wh{\Gamma}\subset \fT\times \Irr K^{\rm ab}_{\{1,\cdots,n\}\setminus J}\times \Irr K_J. 
		\end{align}
		
		Here, note that $K_{\{1,\cdots,n\}\setminus J}^{\rm ab}$ is trivial. 
		For $t\in T$ and $\chi\in \wh{\cZ}(K_J)$, we put $I(\chi,t):= \{ \pi\in \Irr_\chi K_J \,|\, t\otimes\pi \in \Irr\wh{\Gamma} \}$. 
		The additive full subcategory of $\Rep K_J$ generated by $I(\chi,t)$ is a $\Rep (K_J/\cZ(K_J))$-module subcategory and thus 
		$I(\chi,t)$ equals either empty or $\Irr_{\chi} K_J$ by \cref{lem:repsimplemod}. 
		For $\chi\in \wh{\cZ}(K_J)$, we take (possibly empty) finite subsets 
		$S_\chi:=\{ t\in T \,|\, I(\chi,t)\neq\varnothing \}\subset T$, 
		which satisfies 
		\begin{align}\label{eq:Gammaform}
			&
			\Irr\wh{\Gamma} = \coprod\limits_{\chi\in \wh{\cZ}(K_J)}S_\chi \times \Irr_\chi K_J . 
		\end{align}
		Then $e\in S_{\1}$, $S_{\chi}^{-1} = S_{\overline{\chi}}$, and $S_\chi S_{\chi'}\subset S_{\chi\chi'}$ for $\chi,\chi'\in \wh{\cZ}(K)$. If $S_{\chi'}\neq\varnothing$, it follows 
		\begin{align*}
			&
			S_\chi S_{\chi'} 
			\supset S_{\chi\chi'} S_{\overline{\chi'}} S_{\chi'} 
			\supset S_{\chi\chi'} e
			= S_{\chi\chi'} . 
		\end{align*}
		
		Conversely, for any family $(S_\chi)_{\chi\in \wh{\cZ}(K)}$ of finite subsets in $T$ with $e\in S_{\1}$, $S_\chi^{-1}=S_{\overline{\chi}}$, and $S_{\chi}S_{\chi'} =S_{\chi\chi'}$ for $\chi,\chi\in \wh{\cZ}(K_J)$ with $S_{\chi'}\neq\varnothing$, 
		the subset of $\fT\times \Irr K$ of the form \cref{eq:Gammaform} is contained in $\cS_f$. 
		Now for each $J\subset\{1,\cdots,n\}$, such a family $(S_{\chi})_\chi$ corresponds bijectively to the triple $(S,F,f)$ as in the statement. 
		Indeed, 
		\begin{align*}
			&
			(S_\chi)_\chi 
			\mapsto 
			(S_{\1}, \{ \chi\in \wh{\cZ}(K_J) \,|\, S_\chi\neq \varnothing \}, [\chi\mapsto S_\chi/S_{\1}] )
		\end{align*}
		and $(S,F,f)\mapsto (S + f(\chi))_{\chi\in F}$ are mutually inverse bijections. 
		
		Now \cref{cor:subDQGcptquot} completes the proof. 
	\end{proof}
	
	\section{Remarks on variants}\label{sec:variant}
	
	\begin{rem}\label{rem:grtw&fo}
		So far we considered $K_q$ for $0<q<1$, but the same arguments hold for $SU_{-q}(2)$. 
		More generally, the argument of \cref{thm:classifyss} will work for 
		a compact quantum group with the same fusion rule as that of $K$ whose maximal Kac quantum subgroup is identified with $T$, 
		such as the one appearing in \cite[Theorem~4.1]{Neshveyev-Yamashita:suntype}. 
	\end{rem}
	
	\begin{cor}\label{cor:classifySLq2C}
		For $0\neq q\in (-1,1)$, a discrete quantum subgroup $\Gamma\leq\rD(SU_q(2))$ is either one of the following forms for $n\in \bZ_{\geq 1}$. 
		\begin{itemize}
			\item
			$c_0(\Gamma) = C(\bZ/n\bZ)\otimes c_0(\wh{H})$ for $H=\{ \1 \}, SO_q(3), SU_q(2)$. 
			\item
			$c_0(\Gamma) = 
			C(2\bZ/2n\bZ)\otimes c_0(\wh{SO}_q(3))\oplus C((2\bZ+1)/2n\bZ)\otimes I$, 
			where $I$ is the kernel of $c_0(\wh{SU}_q(2))\to c_0(\wh{SO}_q(3))$ corresponding to the discrete quantum subgroup $\wh{SO}_q(3)\leq \wh{SU}_q(2)$. 
		\end{itemize}
		Here we identified $\bZ/n\bZ$ as a subgroup of $\bR/\bZ\cong T$. 
	\end{cor}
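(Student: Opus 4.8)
The plan is to read off the statement from \cref{thm:classifyss} applied to $K = SU(2)$, using \cref{rem:grtw&fo} to cover the range $q \in (-1,0)$. In the notation of \cref{thm:classifyss} we have $n = 1$, the maximal torus is $T \cong \bR/\bZ$, and $\cZ(SU(2)) = \bZ/2\bZ$, so $\wh{\cZ}(SU(2)) = \{\1, \chi\}$ with $\chi$ the nontrivial character. Hence the classifying datum is a quadruple $(J, S, F, f)$ with $J \subseteq \{1\}$; with $S$ a finite subgroup of $T$, necessarily $S \cong \bZ/n\bZ$ for a unique $n \geq 1$ under the fixed identification $T \cong \bR/\bZ$; with $F \leq \wh{\cZ}(K_J)$; and with $f \colon F \to T/S$ a group homomorphism. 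I would also set up the elementary dictionary for $SU_q(2)$: $\Irr_\1 SU_q(2)$ is the set of integer-spin representations, which form the image of the standard fully faithful embedding $\Rep SO_q(3) \hookrightarrow \Rep SU_q(2)$, so that $\bigoplus_{\pi \in \Irr_\1 SU_q(2)} \cB(\cH_\pi) \cong c_0(\wh{SO}_q(3))$; while $\Irr_\chi SU_q(2)$ is the set of half-integer-spin representations, so that $\bigoplus_{\pi \in \Irr_\chi SU_q(2)} \cB(\cH_\pi) = I$ and $c_0(\wh{SU}_q(2)) = c_0(\wh{SO}_q(3)) \oplus I$ as C*-algebras.

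Next I would run through the finitely many possibilities for $(J, F)$. If $J = \varnothing$, then $\wh{\cZ}(K_\varnothing)$ is trivial and $\Irr_\1 K_{\varnothing, q}$ consists only of the trivial representation, so $c_0(\Gamma) = C(S) \cong C(\bZ/n\bZ)$; this is the first bullet with $H = \{\1\}$. If $J = \{1\}$ and $F = \{\1\}$, then $f$ is trivial and $c_0(\Gamma) = C(S) \otimes c_0(\wh{SO}_q(3))$; this is the first bullet with $H = SO_q(3)$. If $J = \{1\}$ and $F = \bZ/2\bZ$, then $f(\1)$ is the zero coset and so $c_0(\Gamma) = C(S) \otimes c_0(\wh{SO}_q(3)) \oplus C(S + f(\chi)) \otimes I$, where $f(\chi) \in T/S$ has order dividing $2$ since $f$ is a homomorphism. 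If $f(\chi)$ lies in $S$, this is $C(S) \otimes \bigl( c_0(\wh{SO}_q(3)) \oplus I \bigr) = C(S) \otimes c_0(\wh{SU}_q(2))$, the first bullet with $H = SU_q(2)$; otherwise $f(\chi)$ has order exactly $2$ in $T/S$, which is the final case.

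In that final case I would unwind the identifications: writing $S = \tfrac1n\bZ/\bZ$ we have $T/S \cong \bR/\tfrac1n\bZ$, whose unique element of order $2$ is represented by $\tfrac1{2n}$, so $S + f(\chi) = \tfrac1{2n} + \tfrac1n\bZ/\bZ$; transporting along the isomorphism $\tfrac1{2n}\bZ/\bZ \cong \bZ/2n\bZ$, $\tfrac{k}{2n} \mapsto k$, sends $S$ to $2\bZ/2n\bZ$ and $S + f(\chi)$ to $(2\bZ + 1)/2n\bZ$, which is precisely the second bullet. For the converse direction one observes that every displayed form arises from an admissible quadruple — a finite subgroup of $T \cong \bR/\bZ$ and an order-$2$ element of the quotient exist for every $n \geq 1$ — so by the converse half of \cref{thm:classifyss} all of them occur and the list is exhaustive. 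I do not expect a genuine obstacle here: \cref{thm:classifyss} carries all the weight, and the only mildly fiddly point is the bookkeeping identifying the coset $S + f(\chi)$ with the labels $2\bZ/2n\bZ$ and $(2\bZ + 1)/2n\bZ$ in the second bullet.
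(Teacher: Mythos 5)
Your proposal is correct and follows exactly the route the paper intends: the corollary is a direct specialization of \cref{thm:classifyss} to $K=SU(2)$ (with \cref{rem:grtw&fo} covering $q<0$), and your case analysis over $(J,F,f)$ together with the identification of $\Irr_{\1}SU_q(2)$ and $\Irr_\chi SU_q(2)$ with $c_0(\wh{SO}_q(3))$ and $I$ is precisely the required bookkeeping. The paper offers no further argument, so there is nothing missing.
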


	Next, we consider the case slightly beyond quantum doubles. 
	Let $Z\leq \cZ(K)$ be a subgroup, regarded as a closed quantum subgroup $\jmath\colon Z\to K_q$. 
	Note that $(\id\otimes\jmath^*)\Delta_{K_q} = \sigma(\jmath^*\otimes\id)\Delta_{K_q}$ and $W^\jmath\in \cU(C(Z)\otimes \cZ(c_0(\wh{K}_q)))$. 
	Then we can check $\Delta_{K_q}$ restricts to $L^\infty(K_q)^Z\to (L^\infty(K_q)^{Z})^{\barotimes 2}$, giving rise to a compact quantum group denoted by $K_q/Z$. 
	It holds $C(K_q/Z)=C(K_q)^Z$, the fixed point C*-algebra of the $Z$-action. 
	Then $\iota_q$ induces a homomorphism $\overline{\iota}_q\colon T/Z\to K_q/Z$ giving a closed quantum subgroup. 
	We can check $\Ad W^{K_q}$ and $\Ad W^{\iota_q}$ restricts to $\Aut(C(K_q/Z)\otimes c_0(\wh{K}_q))$ and $\Aut(C(T/Z)\otimes c_0(\wh{K}_q)))$, respectively, which extend to $*$-automorphisms of corresponding von Neumann algebras. 
	Thus by \cite[Proposition~9.2]{Baaj-Vaes:doublecrossed}, we have well-defined locally compact quantum groups $\rD(K_q)/Z$ and $\rD(\iota_q)/Z$ with 
	\begin{align*}
		&
		C_0(\rD(K_q)/Z)=C(K_q/Z)\otimes c_0(\wh{K}_q),
		\quad\text{and}\quad
		C_0(\rD(\iota_q)/Z)=C(T/Z)\otimes c_0(\wh{K}_q),
	\end{align*}
	respectively. 
	Their comultiplications and counits are given by restricting those of $\rD(K_q)$ and $\rD(\iota_q)$. 
	Especially, they are coamenable. 
	
	\begin{rem}
		In this situation, we have an imprimitivity $(C(K_q)\rtimes Z, C(K_q/Z))$-bimodule $\cE:=(\ell^2(Z)\otimes C(K_q))^Z$, since the action of $Z$ on the C*-algebra $C(K_q)$ is free. 
		For any $*$-representation of $C(K_q/Z)$ on a finite dimensional Hilbert space $\cH$, we get a unital $*$-representation of $C(K_q)\rtimes Z$ on $\wt{\cH}:=\cE\otimes_{C(K_q/Z)}\cH$, but $\cK(\wt{\cH})$ is unital since so are $\cK(\cE)$ and $\cK(\cH)$. 
		Then $\wt{\cH}$ is finite dimensional, and thus the $*$-representation of $C(K_q)\rtimes Z$ on $\wt{\cH}$ goes through $C(T)\rtimes Z$ by \cref{rem:subDQGtorus} and $Z$-equivariance of $\iota_q^*$. 
		Since 
		\begin{align*}
			&
			\cE^*\otimes_{C(K_q)\rtimes Z}C(T)\rtimes Z 
			\\\cong& 
			\bigl( (\ell^2(Z)^*\otimes C(K_q)) \otimes_{\cK(\ell^2(Z))\otimes C(K_q)} (\cK(\ell^2(Z))\otimes C(T)) \bigr)^Z
			\\\cong &
			(\ell^2(Z)^*\otimes C(T))^Z
		\end{align*}
		by the freeness of $Z$-actions, 
		it is an imprimitivity $(C(T/Z),C(T)\rtimes Z)$-bimodule. 
		This means the $C(K_q/Z)$-action on $\cH\cong \cE^*\otimes_{C(K_q)\rtimes Z}\wt{\cH}$ goes through $C(T/Z)$. 
		
		If we let $\gamma_t:=\Ad \bigl((\ev_t\otimes1) (W^{\iota_q})\bigr)$ for $t\in T$, this induces a well-defined continuous action $\overline{\gamma}\colon T/Z\to \Aut(c_0(\wh{K}_q))$ compatible with the comultiplication such that $\rD(\iota_q)/Z\cong \rD(\overline{\gamma})$. 
		It follows discrete quantum subgroups of $\rD(K_q)/Z$ are those of $\rD(\overline{\gamma})$. 
		We can still apply \cref{prop:untwistconn}, and the classification of discrete quantum subgroups of $\rD(K_q)/Z$ boils down to those of $T/Z \times \wh{K}_q$, 
		which correspond to discrete quantum subgroups of $T\times \wh{K}$ containing $Z\times\{\1_K\}$. 
	\end{rem}
	
	Similarly, discrete quantum subgroups of $\rD(K_q/Z)$ are those of $\rD(\overline{\iota}_q)$, which correspond to those of $T/Z\times \wh{K_q/Z}$. 
	We note the consequences for $\rD(SU_q(2))/\cZ(SU(2))$ and $\rD(SO_q(3))$, which are constructed as variants of the quantum Lorentz group in \cite[Section~6]{Podles-Woronowicz1990:quantum}. 
	
	\begin{cor}\label{cor:classifyLorentz}
		For $0<q<1$, a discrete quantum subgroup of $\rD(SU_q(2))/\cZ(SU(2))$ is one of those from the list in \cref{cor:classifySLq2C} for some even number $n\in\bZ_{\geq 1}$. 
		A discrete quantum subgroup of $\rD(SO_q(3))$ is given by $c_0(\bZ/n\bZ)\otimes c_0(\wh{H})$ for some even number $n\in \bZ_{\geq 1}$ and $H=\{ \1 \}, SO_q(3)$.  
	\end{cor}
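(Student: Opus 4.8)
The plan is to combine the two reductions recorded in the discussion just before the corollary with \cref{cor:classifySLq2C} and the argument in the proof of \cref{thm:classifyss}, and then translate parameters. Throughout I write $T\cong\bR/\bZ$ for the maximal torus of $SU(2)$ and identify $\cZ(SU(2))$ with the order-two subgroup $\tfrac12\bZ/\bZ$ of $\bR/\bZ$; recall also $\Irr SU(2)=\{\pi_0,\pi_1,\dots\}$ with $\pi_0$ trivial and $\Irr SO(3)\cong\Irr_{\1}SU(2)=\{\pi_0,\pi_2,\dots\}$.

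For $\rD(SU_q(2))/\cZ(SU(2))$, the discussion preceding the corollary gives a bijection between its discrete quantum subgroups and the discrete quantum subgroups of $T\times\wh{SU(2)}$ containing $\cZ(SU(2))\times\{\1\}$; by \cref{cor:subDQGcptquot} and \cref{cor:classifySLq2C}, the latter are exactly those members of the four families of \cref{cor:classifySLq2C} whose combinatorial datum $S\in\cS_f$ in $\fT\times\Irr SU(2)$ satisfies $\cZ(SU(2))\times\{\1\}\subset S$. Since $\pi_0=\1$, this says precisely that the finite subgroup $S_{\1}:=\{t\in\fT\mid(t,\1)\in S\}$ of $\bR/\bZ$ contains $\tfrac12\bZ/\bZ$, i.e. that $|S_{\1}|$ is even; inspecting the four families and noting that $|S_{\1}|$ equals the parameter $n$ in each then gives the statement for $\rD(SU_q(2))/\cZ(SU(2))$. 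This inspection is the only computation involved, amounting to unwinding the coset-union description of each $S$.

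For $\rD(SO_q(3))=\rD(SU_q(2)/\cZ(SU(2)))$, the same discussion identifies its discrete quantum subgroups with those of $(T/\cZ(SU(2)))\times\wh{SO}_q(3)$, and \cref{prop:untwistconn} (applied to the connected compact group $T/\cZ(SU(2))$) further identifies these with the subsets $S\in\cS_f$ of $\fT'\times\Irr SO(3)$, where $\fT'$ denotes $T/\cZ(SU(2))$ with the discrete topology and $\Irr SO_q(3)\cong\Irr SO(3)$. Since $\cZ(SO(3))$ is trivial and $SO(3)$ is simple with trivial abelianization, the dichotomy step in the proof of \cref{thm:classifyss} for a single simple factor applies verbatim: any such $S$ is either contained in $\fT'\times\{\1\}$, in which case $S=S'\times\{\1\}$ and $c_0(\Gamma)=c_0(S')$, or contains $\{e\}\times\Irr SO(3)$, in which case $S=S'\times\Irr SO(3)$ and $c_0(\Gamma)=c_0(S')\otimes c_0(\wh{SO}_q(3))$, where $S'\leq T/\cZ(SU(2))$ is a finite subgroup; these are the cases $H=\{\1\}$ and $H=SO_q(3)$. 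Recording $S'$ via its preimage $\bZ/n\bZ$ under the double covering $T\to T/\cZ(SU(2))$, which necessarily has even order $n$, gives the stated form; conversely each such datum clearly produces a discrete quantum subgroup.

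I do not expect a genuine obstacle: the substance is already contained in the two reductions and in \cref{thm:classifyss} and \cref{cor:classifySLq2C}. The only point needing care is the bookkeeping of parameters — keeping track of which copy of $\bR/\bZ$ one works in (the maximal torus of $SU(2)$ versus that of $SO(3)$), checking that "$\cZ(SU(2))\times\{\1\}\subset S$" forces $n$ even uniformly across all four families of \cref{cor:classifySLq2C}, and not conflating $\rD(SU_q(2))/\cZ(SU(2))$ with $\rD(SO_q(3))$, the former retaining the full $c_0(\wh{SU}_q(2))$ (so its answer keeps the $H=SU_q(2)$ family) while the latter uses $c_0(\wh{SO}_q(3))$.
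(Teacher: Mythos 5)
Your proposal is correct and follows exactly the route the paper intends: the corollary is stated as a direct consequence of the reductions in the preceding remark (discrete quantum subgroups of $\rD(K_q)/Z$ correspond to those of $T\times\wh{K}$ containing $Z\times\{\1\}$, and those of $\rD(K_q/Z)$ to those of $T/Z\times\wh{K_q/Z}$), combined with \cref{cor:classifySLq2C} and the dichotomy from the proof of \cref{thm:classifyss}. Your parameter bookkeeping (evenness of $n$ from $\tfrac12\bZ/\bZ\subset S_{\1}$ in all four families, and recording subgroups of $T/\cZ(SU(2))$ by their preimages in $T$) matches the paper's conventions.
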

	
	Now we consider the case of $U_q(n)$ for $0<q<1$ and $n\geq 2$. 
	We follow the treatment from \cite[Section~9.2]{Klimyk-Schmudgen:book}, except that 
	we write $u_{i,j}:=u^i_j$ for the generators of $\cO(U_q(n))$ and also for their images in $\cO(SU_q(n))$, where $i,j\in\{1,\cdots,n\}$. 
	To deduce several properties quickly, we use the following picture of $U_q(n)$. 
	By putting $u\in C(U(1))$ for the unitary given by $U(1)\subset \bC$, we have two well-defined $*$-homomorphisms 
	\begin{align*}
		f\colon C^u(U_q(n))\to& C(U(1))\otimes C(SU_q(n)),
		\\
		u_{i,j}\mapsto& u\otimes u_{i,j}
	\end{align*}
	\begin{align*}
		g\colon C(U(1))\otimes C(SU_q(n))\to & \cB(\bC^{\oplus n})\otimes C^u(U_q(n)) 
		\\
		u\otimes 1\mapsto & \left(\begin{array}{ccccc}
			0&1&0&\cdots&0\\
			0&0&1&\cdots&0\\
			\vdots&\vdots&\vdots&\ddots&\vdots\\
			0&0&0&\cdots&1\\
			\cD_q&0&0&\cdots&0
		\end{array}\right)
		\\
		u\otimes u_{i,j}\mapsto & 1\otimes u_{i,j}
	\end{align*}
	by the universality. 
	Here $\cD_q\in\cO(U_q(n))$ is the quantum determinant, which is a unitary lying in the center. 
	It holds $gf(x)=1\otimes x$ for $x\in C^u(U_q(n))$, and thus $f$ is injective. 
	Moreover, if we consider the homomorphism $\imath\colon \bZ/n\bZ\cong \cZ(SU(n))\to SU_q(n)$ given by the closed subgroup of the maximal torus in $SU(n)$ and the inclusion $\jmath\colon \bZ/n\bZ\cong \{ \zeta\in \bC\,|\, \zeta^n=1 \}\subset U(1)$, 
	we can see that the image of $f$ equals 
	\begin{align*}
		&
		\left\{ x\in C(U(1))\otimes C(SU_q(n)) \,\left|\, 
		\begin{array}{l}
			\quad(\id\otimes \jmath^*\otimes\id)(\Delta_{U(1)}\otimes\id)(x) \\
			=(\id\otimes\imath^*\otimes\id)(\id\otimes\Delta_{SU_q(n)})(x)
		\end{array} \right.\right\}, 
	\end{align*}
	by using $(\id\otimes\imath^*)\Delta_{SU_q(n)}=\sigma (\imath^*\otimes \id)\Delta_{SU_q(n)}$. 
	Clearly $f$ gives a homomorphism denoted by $\phi\colon U(1)\times SU_q(n)\to U_q(n)$. 
	
	Using this picture, the Haar state of $U_q(n)$ is given by $hf$ for the Haar state $h$ of $U(1)\times SU_q(n)$, and since it is faithful on $C^u(U_q(n))$, the coamenability of $K_q$ follows. 
	The injectivity of $f$ shows $\wh{\phi}$ gives a discrete quantum subgroup. 
	Then $\Rep U_q(n)$ is identified as the C*-tensor full subcategory of $\Rep (U(1)\times SU_q(n))$ consisting of the objects whose restrictions to $\Rep \bZ/n\bZ$ via $(\jmath\times \imath)\delta$ are finite direct sums of $\1_{\bZ/n\bZ}$, where $\delta\colon \bZ/n\bZ\to (\bZ/n\bZ)^{\times2}$ is the map sending each $k\in\bZ/n\bZ$ to $(-k,k)$. 
	Thus the fusion rules and the classical dimensions of $\Rep U_q(n)$ and $\Rep U(n)$ are the same, because so are for $\Rep SU_q(n)$ and $\Rep SU(n)$ with the isomorphism $R(SU_q(n))\cong R(SU(n))$ commuting with the restrictions to $R(\cZ(SU(n)))$. 
	
	We regard the maximal torus $T$ of $U(n)$ as a closed quantum subgroup $\iota_q\colon T\to U_q(n)$ by 
	\begin{align*}
		\iota_q^*(u_{i,j})&:= (\delta_{i,j} u \text{ at the $i$th component})
		\in C(U(1))^{\otimes n}\cong C(T) . 
	\end{align*}
	The injective $*$-homomorphism induced by $C(U(1))\ni u\mapsto \cD_q\in C(U_q(n))$ gives a closed quantum subgroup $\bZ\cong \wh{U}(1)\to \wh{U}_q(n)$. 
	For $\chi\in\wh{\cZ}(U(n))$, we put $\Irr_\chi U_q(n)$ for the image of $\Irr_\chi U(n)$ via $\Irr U(n) \cong \Irr U_q(n)$. 
	
	\begin{thm}\label{thm:classifyGLqC}
		A discrete quantum subgroup $\Gamma\leq\rD(U_q(n))$ is either a discrete subgroup of $T\times \wh{U}(1)$, or given by 
		\begin{align*}
			&
			c_0(\Gamma)=\bigoplus\limits_{\chi\in F}C(S+f(\chi))\otimes \bigoplus\limits_{\pi\in \Irr_\chi U_q(n)} \cB(\cH_\pi)
		\end{align*}
		for a finite subgroup $S\subset T$ and a group homomorphism $f\colon F\to T/S$ from a subgroup $F\subset \wh{\cZ}(U(n))\cong \bZ$. 
	\end{thm}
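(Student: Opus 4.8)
The plan is to run the proof of \cref{thm:classifyss}, with the product $\prod_k K_k/\cZ(K_k)$ of adjoint simple groups replaced by the single simple group $PU(n):=U(n)/\cZ(U(n))$ and the finite group $\wh{\cZ}(K)$ replaced by $\wh{\cZ}(U(n))\cong\bZ$, after first reducing to $\rD(\iota_q)$.

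First I would show that the maximal Kac quantum subgroup of $U_q(n)$ is $T$. A finite dimensional $*$-representation $\rho$ of $C^u(U_q(n))$ yields a finite dimensional $*$-representation $(\id\otimes\rho)\circ g$ of $C(U(1))\otimes C(SU_q(n))$; since $U(1)$ is commutative and every finite dimensional $*$-representation of $C(SU_q(n))$ factors through $C(T_{SU(n)})$ for the maximal torus $T_{SU(n)}\leq SU(n)$ by \cref{rem:subDQGtorus}, it factors through $C(U(1))\otimes C(T_{SU(n)})$. The explicit formulas show that the composite of $f$ with the quotient $C(U(1))\otimes C(SU_q(n))\to C(U(1))\otimes C(T_{SU(n)})$ equals $\iota_q^*$ followed by the injective $*$-homomorphism dual to the covering $U(1)\times T_{SU(n)}\to T$; together with $gf(x)=1\otimes x$ this forces $\rho$ to factor through $C(T)$. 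Hence by \cref{lem:DQGmKac} (applied as in \cref{rem:subDQGtorus}) a discrete quantum subgroup of $\rD(U_q(n))$ is one of $\rD(\iota_q)$; setting $\gamma_t:=\Ad\bigl((\ev_t\otimes\id)(W^{\iota_q})\bigr)$ for $t\in T$ gives $\rD(\iota_q)\cong\rD(\gamma)$, and since the fusion rules of $U_q(n)$ and $U(n)$ coincide, \cref{prop:untwistconn} yields, as in \cref{cor:subDQGcptquot}, a bijection between discrete quantum subgroups of $\rD(U_q(n))$ and those of $T\times\wh{U}(n)$.

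Next, fix a discrete quantum subgroup $\Gamma\leq T\times\wh{U}(n)$ and identify $\Irr\wh{\Gamma}$ with an element $S$ of $\cS_f$ for $\fT\times\Irr U(n)$ with trivial $\gamma$. If every $t\otimes\pi\in S$ has $\pi$ one-dimensional, i.e.\ $\pi=\cD_q^k$ for some $k\in\bZ$, then $S$ is contained in the copy of $\fT\times\Irr U(1)$ coming from the closed quantum subgroup $\wh{U}(1)\leq\wh{U}_q(n)$, so $\Gamma$ is a discrete quantum subgroup of the abelian locally compact group $T\times\wh{U}(1)$, hence a discrete subgroup of it. Otherwise there is $t\otimes\pi\in S$ with $\dim\cH_\pi>1$; then $\overline{\pi}\otimes\pi\in\Rep_{\1}U(n)$ has an irreducible summand $\neq\1$, and $e\otimes(\overline{\pi}\otimes\pi)\leq\overline{t\otimes\pi}\otimes(t\otimes\pi)\in\Rep\wh{\Gamma}$. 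As the only conjugation-closed unital subrings of $R(PU(n))$ are $\bZ\1$ and $R(PU(n))$ by the simplicity of $PU(n)$, this forces $\{e\}\times\Irr PU(n)\subset S$.

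In the latter case, for every $t\in T$ and $m\in\wh{\cZ}(U(n))\cong\bZ$ the additive full subcategory of $\Rep U(n)$ generated by $\{\pi\in\Irr_m U(n)\,|\,t\otimes\pi\in S\}$ is a $\Rep PU(n)$-module subcategory, so by \cref{lem:repsimplemod} it is $0$ or $\Rep_m U(n)$; hence $S=\coprod_m S_m\times\Irr_m U(n)$ with $S_m\subset T$ finite, and as in the proof of \cref{thm:classifyss} one gets $e\in S_0$, $S_m^{-1}=S_{-m}$, and $S_m S_{m'}=S_{m+m'}$ whenever $S_{m'}\neq\varnothing$. Since $T$ is abelian, $S:=S_0$ is then a finite subgroup of $T$, $F:=\{m\,|\,S_m\neq\varnothing\}$ a subgroup of $\wh{\cZ}(U(n))\cong\bZ$, and $f\colon F\to T/S$, $m\mapsto S_m/S$ a group homomorphism, which gives the stated form of $c_0(\Gamma)$; conversely any such $(S,F,f)$ gives an element of $\cS_f$. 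The step demanding real care is the identification of the maximal Kac quantum subgroup of $U_q(n)$, which, unlike that of $K_q$, is not recorded in the cited literature but reduces at once to the $SU_q(n)$ case via the embedding $f$; the rest is a routine transcription, the only genuinely new feature being that $\wh{\cZ}(U(n))$ is infinite — harmless, as $S_\pi=S_m$ stays finite for $\pi\in\Irr_m U(n)$.
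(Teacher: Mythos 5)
Your proposal is correct and follows essentially the same route as the paper: reduce to $\rD(\iota_q)$ by showing finite dimensional $*$-representations of $C^u(U_q(n))$ factor through $C(T)$ via the relation to $C(U(1))\otimes C(SU_q(n))$, untwist with \cref{prop:untwistconn} to pass to $T\times\wh{U}(n)$, and then run the dichotomy and module-category argument of \cref{thm:classifyss} with $K_J/\cZ(K_J)$ replaced by $PU(n)$ and $\wh{\cZ}(K)$ by $\bZ$. The only (immaterial) difference is that the paper writes down an explicit $*$-isomorphism $C(U_q(n))\cong C(U(1))\otimes C(SU_q(n))$, $u_{i,j}\mapsto u^{\delta_{i,1}}\otimes u_{i,j}$, for the first reduction, whereas you extract the same factorization from the pair $(f,g)$ and the identity $gf(x)=1\otimes x$.
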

	
	\begin{proof}
		Since there are mutually inverse well-defined $*$-isomorphisms induced by 
		\begin{align*}
			C(U_q(n)) \rightleftarrows &\, C(U(1))\otimes C(SU_q(n)) ,\\
			u_{i,j} \mapsto & u^{\delta_{i,1}}\otimes u_{i,j}\\
			u_{i,j}\cD_q^{m-\delta_{i,1}} \mapsfrom&\, u^m\otimes u_{i,j}
		\end{align*} 
		for $i,j\in\{1,\cdots,n\}$ and $m\in\bZ$, 
		we see any finite dimensional $*$-representation of $C(U_q(n))$ factors through $C(T)$ and thus any discrete quantum subgroup of $\rD(U_q(n))$ is that of $\rD(\iota_q)$ as in \cref{rem:subDQGtorus}. 
		
		Now the arguments of \cref{cor:subDQGcptquot} and \cref{eq:Gammasandwich} still work if we let $K=K_{1}=U(n)$. 
		Note that $U(n)^{\rm ab}\cong U(1)$ via the determinant map. 
		When $J=\varnothing$, a discrete quantum subgroup of $\rD(U_q(n))$ is that of the generalized quantum double of $T\leq U_q(n)\to U(1)$, which is isomorphic to $T\times \wh{U}(1)$. 
		If $J=\{1\}$, the remaining proof of \cref{thm:classifyss} is also valid for this situation, and we obtain the conclusion. 
	\end{proof}
	
	\providecommand{\bysame}{\leavevmode\hbox to3em{\hrulefill}\thinspace}


\begin{thebibliography}{999}
		
		\bibitem{Arano:sphericalDrinfeld}
		Y.~Arano, \emph{Unitary spherical representations of {Drinfeld} doubles}, J.
		Reine Angew. Math. \textbf{742} (2018), 157--186.
		
		\bibitem{Baaj-Vaes:doublecrossed}
		S.~Baaj and S.~Vaes, \emph{Double crossed products of locally compact quantum
			groups}, J. Inst. Math. Jussieu \textbf{4} (2005), no.~1, 135--173.
		
		\bibitem{Brannan-Chirvasitu-Viselter2020:actionslattices}
		M.~Brannan, A.~Chirvasitu, and A.~Viselter, \emph{Actions, quotients and
			lattices of locally compact quantum groups}, Doc. Math. \textbf{25} (2020),
		2553--2582.
		
		\bibitem{Daws-Kasprzak-Skalski-Soltan:closed}
		M.~Daws, P.~Kasprzak, A.~Skalski, and P.M. So{\l}tan, \emph{Closed quantum
			subgroups of locally compact quantum groups}, Adv. Math. \textbf{231} (2012),
		no.~6, 3473--3501.
		
		\bibitem{deCommer-Freslon-Yamashita2014:CCAP}
		K.~De~Commer, A.~Freslon, and M.~Yamashita, \emph{{CCAP} for universal discrete
			quantum groups}, Comm. Math. Phys. \textbf{331} (2014), 677--701.
		
		\bibitem{Freslon2013:examples}
		A.~Freslon, \emph{Examples of weakly amenable discrete quantum groups}, J.
		Funct. Anal. \textbf{265} (2013), no.~9, 2164--2187.
		
		\bibitem{Hoshino2023:equivariant}
		M.~Hoshino, \emph{Equivariant covering spaces of quantum homogeneous spaces},
		arXiv preprint (2023), arXiv:2301.04975.
		
		\bibitem{Izumi2002:noncommPoisson}
		M.~Izumi, \emph{Non-commutative {Poisson} boundaries and compact quantum group
			actions}, Adv. Math. \textbf{169} (2002), no.~1, 1--57.
		
		\bibitem{Kalantar-Kasprzak-Skalski-Vergnioux2022:noncommutative}
		M.~Kalantar, P.~Kasprzak, A.~Skalski, and R.~Vergnioux, \emph{Noncommutative
			{Furstenberg} boundary}, Anal. PDE \textbf{15} (2022), no.~3, 795--842.
		
		\bibitem{Klimyk-Schmudgen:book}
		A.~Klimyk and K.~Schm{\"u}dgen, \emph{Quantum groups and their
			representations}, Texts and Monographs in Physics, Springer Verlag, 1997.
		
		\bibitem{Korogodski-Soibelman1998:bookptI}
		L.I. Korogodski and Y.S. Soibelman, \emph{Algebras of functions on quantum
			groups: Part I}, Mathematical Surveys and Monographs, vol.~56, American
		Mathematical Soc., 1998.
		
		\bibitem{Kustermans:universal}
		J.~Kustermans, \emph{Locally compact quantum groups in the universal setting},
		Internat. J. Math. \textbf{12} (2001), no.~03, 289--338.
		
		\bibitem{Kustermans-Vaes:lcqg}
		J.~Kustermans and S.~Vaes, \emph{Locally compact quantum groups}, Ann. Sci.
		\'Ecole Norm. Sup. \textbf{33} (2000), no.~6, 837--934.
		
		\bibitem{Kustermans-Vaes:lcqgvN}
		\bysame, \emph{Locally compact quantum groups in the {von Neumann} algebraic
			setting}, Math. Scand. \textbf{92} (2003), no.~1, 68--92.
		
		\bibitem{Meyer-Roy-Woronowicz:qghom}
		R.~Meyer, S.~Roy, and S.L. Woronowicz, \emph{Homomorphisms of quantum groups},
		M{\"u}nster J. Math. \textbf{5} (2012), 1--24.
		
		\bibitem{Neshveyev-Tuset:book}
		S.~Neshveyev and L.~Tuset, \emph{Compact quantum groups and their
			representation categories}, vol.~20, Soci{\'e}t{\'e} math{\'e}matique de
		France Paris, 2013.
		
		\bibitem{Neshveyev-Yamashita:suntype}
		S.~Neshveyev and M.~Yamashita, \emph{Classification of non-{Kac} compact
			quantum groups of {$SU(n)$} type}, Int. Math. Res. Not. \textbf{2016} (2016),
		no.~11, 3356--3391.
		
		\bibitem{Podles-Woronowicz1990:quantum}
		P.~Podle{\'s} and S.L. Woronowicz, \emph{Quantum deformation of {Lorentz}
			group}, Comm. Math. Phys \textbf{130} (1990), 381--431.
		
		\bibitem{Pusz1993:irrepqLorentz}
		W.~Pusz, \emph{Irreducible unitary representations of quantum {Lorentz} group},
		Comm. Math. Phys. \textbf{152} (1993), 591--626.
		
		\bibitem{Roy:double}
		S.~Roy, \emph{The {Drinfeld} double for $\mathrm{C}^\ast$-algebraic quantum
			groups}, J. Operator Theory \textbf{74} (2015), no.~2, 485--515.
		
		\bibitem{Soibelman1990:algebraquantum}
		Y.S. Soibelman, \emph{Algebra of functions on a compact quantum group and its
			representations}, Algebra i Analiz \textbf{2} (1990), no.~1, 190--212
		(Russian), translated in Leningrad Math.~J.~{\bf 2} (1991), no.~1, 161–178.
		
		\bibitem{Soltan:quantumBohr}
		P.M. So{\l}tan, \emph{Quantum {Bohr} compactification}, Illinois J. Math.
		\textbf{49} (2005), no.~4, 1245--1270.
		
		\bibitem{Soltan-Woronowicz:mltu2}
		P.M. So{\l}tan and S.L. Woronowicz, \emph{From multiplicative unitaries to
			quantum groups {II}}, J. Funct. Anal. \textbf{252} (2007), no.~1, 42--67.
		
		\bibitem{Tomatsu:coidealPoissonbdry}
		R.~Tomatsu, \emph{A characterization of right coideals of quotient type and its
			application to classification of {Poisson} boundaries}, Comm. Math. Phys.
		\textbf{275} (2007), no.~1, 271--296.
		
		\bibitem{Vaes:thesis}
		S.~Vaes, \emph{Locally compact quantum groups}, Ph.D. thesis, KU Leuven, 2001.
		
		\bibitem{Voigt-Yuncken2020:book}
		C.~Voigt and R.~Yuncken, \emph{Complex semisimple quantum groups and
			representation theory}, Lecture Notes in Mathematics, vol. 2264, Springer
		Nature, 2020.
		
		\bibitem{Woronowicz:mltu}
		S.L. Woronowicz, \emph{From multiplicative unitaries to quantum groups},
		Internat. J. Math. \textbf{7} (1996), no.~1, 129--149.
		
	\end{thebibliography}
\end{document}